\newtheorem{theorem}{Theorem}
\newtheorem{lem}{Lemma}[section]
\newtheorem{example}{Example}
\newtheorem{myDef}{Definition}
\newtheorem{cor}{Corollary}
\newtheorem{rem}{Remark}
\begin{document}
	\title{\bf The neural network models with delays for solving absolute value equations}
	\author[a]{Dongmei Yu\thanks{Supported partially by the Natural Science Foundation of China (12201275). Email: yudongmei1113@163.com.}}
    \author[a]{Gehao Zhang\thanks{Email: GeHao.Zh@outlook.com.}}
    \author[b]{Cairong Chen\thanks{Corresponding author. Supported partially by the Natural Science Foundation of Fujian Province (Grant No. 2020J01166). Email address: cairongchen@fjnu.edu.cn.}}
    \author[c]{Deren Han\thanks{Supported partially by the National Natural Science Foundation of China (12131004) and the Ministry of Science and Technology of China (2021YFA1003600). Email address: handr@buaa.edu.cn.}}
   \affil[a]{Institute for Optimization and Decision Analytics, Liaoning Technical University, Fuxin, 123000, P.R. China.}
   \affil[b]{School of Mathematics and Statistics \& Key Laboratory of Analytical Mathematics and Applications (Ministry of Education) \& Fujian Provincial Key Laboratory of Statistics and Artificial Intelligence \& Fujian Key Laboratory of Analytical Mathematics and Applications (FJKLAMA) \& Center for Applied Mathematics of Fujian Province (FJNU), Fujian Normal University, Fuzhou, 350117, P.R. China.}
   \affil[c]{LMIB of the Ministry of Education, School of Mathematical Sciences, Beihang University, Beijing, 100191, P.R. China.}
    \maketitle

\begin{quote}
{\bf Abstract:} An inverse-free neural network model with mixed delays is proposed for solving the absolute value equation (AVE) $Ax -|x| - b =0$, which includes an inverse-free neural network model with discrete delay as a special case. By using the Lyapunov-Krasovskii theory and the  linear matrix inequality (LMI) method, the developed neural network models are proved to be exponentially convergent to the solution of the AVE. Compared with the existing neural network models for solving the AVE, the proposed models feature the ability of solving a class of AVE with $\|A^{-1}\|>1$. Numerical simulations are given to show the effectiveness of the two delayed neural network models.	

{\small \medskip {\bf Keywords:} Delayed neural network model; Absolute value equation; Lyapunov-Krasovskii theory; Linear matrix inequality; Exponential convergence.}

{\small \medskip {\bf Mathematics Subject Classification:} 65F10, 65H10, 90C30}
\end{quote}

\section{Introduction}\label{sec:intro}
We are interested in solving the absolute value equation (AVE) problem of finding an $x\in \mathbb{R}^n$ such that
\begin{equation}\label{eq:ave}
Ax - |x| - b = 0,
\end{equation}
where $A \in \mathbb{R}^{n \times n}$, $b \in \mathbb{R}^{n}$ and $|x|\in \mathbb{R}^n$  denotes the componentwise absolute value of $x$ whose the $i$-th component is $x_i$ if $x_i\ge 0$ and $-x_i$ otherwise. In fact, if $b=0$, then $x=0$ is clearly a solution of the AVE~\eqref{eq:ave}. In the rest of this paper, we will assume that $b\neq 0$ and the AVE~\eqref{eq:ave} is solvable.

Over the past two decades, the AVE~\eqref{eq:ave} has attracted more and more attention because of its relevance to many mathematical programming problems, such as the linear complementarity problem (LCP), the bimatrix game and so on; see, e.g., \cite{mang2006,mang2007,prok2009} and references therein. In general, solving the AVE~\eqref{eq:ave} is NP-hard \cite{mang2006} and it has been proved that if the AVE~\eqref{eq:ave} is solvable, it can have either a unique solution or multiple solutions \cite{hlad2023,wugu2016}.

For the solvable AVE~\eqref{eq:ave}, numerically, there are already various algorithms for solving it and most of which are available when it has a unique solution. Moreover, the existing numerical methods can roughly be viewed differently from a discrete perspective and a continuous perspective. Our focus in this paper is the latter. Concretely, from a continuous perspective, several neural network models (also known as dynamic models in the literature) have been investigated for solving the
AVE~\eqref{eq:ave}; see, e.g., \cite{gawa2014,hucu2017,maee2017,maer2018,cyyh2021,jlhh2021,
lyyc2022,ycyh2023,jyfc2023,sanc2019}. Among them, theoretically, the neural network models proposed in \cite{sanc2019,lyyc2022,jyfc2023,jlhh2021} are only available for the case that $\|A^{-1}\|<1$ while the models proposed in \cite{cyyh2021,ycyh2023} are available for the case that $\|A^{-1}\|\le 1$. In addition, the models proposed in \cite{hucu2017,maee2017,maer2018,gawa2014} involves the computation of $(A-I)^{-1}$ or $A^{-1}$, which may increase the computation load, especially for the large scale AVE~\eqref{eq:ave}. Hence, the goal of this paper is to develop inverse-free neural network models for solving the AVE~\eqref{eq:ave} and, theoretically, the proposed models feature the ability of solving a class of AVE~\eqref{eq:ave} with $\|A^{-1}\|>1$.

In order to achieve the goal of this paper, inspired by the researches of \cite{licx2005,hhgw2014}, delays is firstly introduced into the neural network models for solving the AVE~\eqref{eq:ave}. Neural network models with discrete or distributed delay have been studied for solving some optimization problems, such as the nonsmooth constrained pseudoconvex optimization problem \cite{xcqw2020}, the distributed convex optimization problem \cite{jqxl2022}, the constrained convex optimization problem \cite{zhlh2022}, the quadratic programming problem \cite{yaca2006}, the linear variational inequality \cite{chht2009,hhgw2014}, the linear projection equation \cite{licx2005} and the cardinality-constrained portfolio selection problem \cite{lewa2022}. As is well known, delays inevitably occur during the signal communication among neurons \cite{bami1991,fepo2010}.

The main contributions of this paper can be summarized as follows:
\begin{itemize}
  \item [(i)] An inverse-free neural network model with mixed delays is firstly proposed for solving the AVE~\eqref{eq:ave}, which includes an inverse-free neural network model with discrete delay as a special case.

  \item [(ii)] The properties and exponential stabilities of the proposed models are studied in detail. Compared with the proof of \cite[Theorem~1]{hhgw2014}, the Lyapunov-Krasovskii functional in our proof of Theorem~1 is simpler and the derivative of the initial function $\phi$ is not required in our analysis.

  \item [(iii)] Numerical simulations are given to demonstrate the effectiveness of the proposed models.
\end{itemize}

The rest of this paper is organized as follows. In Section~\ref{sec:prel}, we present some classical definitions and preliminary results relevant to our later developments.
In Section~\ref{sec:nn}, the proposed neural network models are formal described and their  theoretical properties are explored. Numerical simulations are given in
Section~\ref{sec:exam}. Conclusions are made in Section~\ref{sec:conc}.

\textbf{Notation}.
$\mathbb{R}^{n\times n}$ denotes the set of all $n \times n$ real matrices and $\mathbb{R}^{n}= \mathbb{R}^{n\times 1}$. $\mathbb{R}$ denotes the set of real numbers. The transposition of a matrix or a vector is denoted by $ ^\top $. $I$ is the identity matrix with suitable dimention. The inner product of two vectors in $\mathbb{R}^n$ is defined as $\langle x, y\rangle\doteq x^\top y= \sum\limits_{i=1}^n x_i y_i$ and $\| x \|\doteq\sqrt{\langle x, x\rangle} $ denotes the Euclidean norm. For a given constant $\tau >0$, $\mathcal{C}_{\tau} = \mathcal{C} ([-\tau, 0], \mathbb{R}^{n})$ denotes the Banach space of continuous mappings from $[-\tau,0]$ into $\mathbb{R}^{n}$ equipped with the supremum norm $\|\phi\|_s = \sup\limits_{\theta\in [-\tau_,0]} \|\phi(\theta)\|$. For $\Pi\in \mathbb{R}^{n\times n}$,  $\Pi \prec 0$ means that $\Pi$ is a negative definite matrix and $\|\Pi\|$ denotes the spectral norm of $\Pi$ which is defined by $\| \Pi\|\doteq \max \left\{ \| \Pi x \| : x \in \mathbb{R}^{n}, \|x\|=1 \right\}$. For a symmetric matrix $\Pi\in \mathbb{R}^{n\times n}$, write $\lambda_{\min}(\Pi)$ and $\lambda_{\max}(\Pi)$ respectively the smallest and largest eigenvalues of $\Pi$. For a function $f$, $\dot{f}$ denotes the derivative of $f$.

\section{Preliminaries}\label{sec:prel}
We begin with a class of retarded type time-delay dynamic models of the form \cite{guck2003}
\begin{equation}\label{ds}
\left\{
\begin{aligned}
&\frac{{\rm{d}} x(t)}{{\rm{d}} t}=f(t,x_{t}),\quad t\ge 0,\\
&x(t) = \phi(t), \quad  t \in \left[- h, 0\right],
\end{aligned}
\right.
\end{equation}
where $x_t\in \mathcal{C}_{h}$ is defined by $x_{t}(\xi)=x(t+\xi)$ with $\xi \in [-h,0]$\footnote{For any given $t\in \mathbb{R}$ and a continuous vector-valued function $x: \mathbb{R}\mapsto \mathbb{R}^n$, $x_t$ is the restriction of $x$ on the segment $[t-h,t]$.}, $\phi\in \mathcal{C}_{h}$ and $f: [0,\infty) \times \mathcal{C}_h \rightarrow \mathbb{R}^{n}$ is a functional. In the case where the initial function $\phi$ should be indicated explicitly we use the notation $x(t,\phi)$ to denote the solution of \eqref{ds}. Otherwise, $x(t)$ is used.

A vector $\hat{x}\in \mathbb{R}^n$ gives a state $\hat{x}_t\in \mathcal{C}_h$, which is a constant mapping on $[-h,0]$ with the constant value $\hat{x}\in \mathbb{R}^n$.

\begin{myDef}
A vector $x^{*} \in R^{n}$ is called an equilibrium point of \eqref{ds} with the initial state $x^*_0$ if $f(t,x_t^{*}) \equiv 0$ for $t\ge 0$.
\end{myDef}

The following definition is in the light of \cite[Definition~1.4.]{Kharitonov2012}.
\begin{myDef}\label{exponentially}
The equilibrium point $x^{*}$ of \eqref{ds} is said to be exponentially stable, if there exist $\Delta >0$, $\sigma >0$, and $\gamma >0$ such that for any initial function $\phi$ with $\|\phi(t) - x^*\|_s<\Delta$, the following inequality holds:
\begin{equation*}
	\| x(t) - x^{*} \| \leq \gamma \|\phi(t)-x^*\|_s {\rm e}^{- kt},\quad \forall t\ge 0.
\end{equation*}
\end{myDef}

In the following, we recall the definition of Euclidean projection and present its basic properties. Let $C$ be a nonempty closed convex subset of $\mathbb{R}^n$. Then there exists a unique vector $\bar{x}\in C$ that is closest to $x\in \mathbb{R}^n$ in the sense of Euclidean norm. The vector $\bar{x}$ is called the Euclidean projection of $x$ onto $C$ and denoted by $P_C[x]$. The mapping $P_C: x\mapsto P_C[x]$ is called the Euclidean projector onto $C$. For the Euclidean projector, we have the following properties.

\begin{lem}\label{ty}(\cite[1.5.5~Theorem]{fapa2003})
Let $C$ be a nonempty closed convex subset of $\mathbb{R}^n$. Then the following statements are valid.
\begin{itemize}
  \item [(i)] For any two vectors $x$ and $y$ in $\mathbb{R}^n$,
  $$\left\|P_{C}[x] - P_{C}[y]\right\| \leq \|x-y\|.$$

  \item [(ii)] For each $x\in \mathbb{R}^n$, $P_C[x]$ is the unique vector $\bar{x}\in C$ satisfying the inequality:
      $$(y - \bar{x})^\top (\bar{x}-x)\ge 0, \quad \forall y\in C.$$

  \item [(iii)] For any two vectors $x$ and $y$ in $\mathbb{R}^n$,
  $$ \left\|P_{C}[x] - P_{C}[y]\right\|^2  \leq \left(P_{C}[x] - P_{C}[y]\right)^{\top} (x - y).$$
\end{itemize}
\end{lem}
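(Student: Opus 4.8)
The plan is to establish part (ii) first, since parts (iii) and (i) then follow from it by short algebraic manipulations. For (ii), I would fix $x\in\mathbb{R}^n$, write $\bar{x}=P_C[x]$, and use that $\bar{x}$ minimizes the differentiable convex function $z\mapsto\frac{1}{2}\|z-x\|^2$ over $C$. Given any $y\in C$ and any $t\in(0,1]$, convexity of $C$ gives $\bar{x}+t(y-\bar{x})\in C$, hence $\|\bar{x}+t(y-\bar{x})-x\|^2\ge\|\bar{x}-x\|^2$; expanding the left-hand side and cancelling yields $2t\,(y-\bar{x})^\top(\bar{x}-x)+t^2\|y-\bar{x}\|^2\ge 0$, and dividing by $t$ and letting $t\downarrow 0$ gives $(y-\bar{x})^\top(\bar{x}-x)\ge 0$. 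For the converse and uniqueness, suppose $\bar{x}\in C$ satisfies this inequality for all $y\in C$; then the identity $\|y-x\|^2=\|\bar{x}-x\|^2+2(y-\bar{x})^\top(\bar{x}-x)+\|y-\bar{x}\|^2$ shows $\|y-x\|\ge\|\bar{x}-x\|$ for every $y\in C$, so $\bar{x}$ is a closest point to $x$ and therefore equals $P_C[x]$ by the uniqueness of the projection recalled just before the lemma. Alternatively, if $\bar{x}_1$ and $\bar{x}_2$ both satisfy the characterization, then substituting $y=\bar{x}_2$ in the inequality for $\bar{x}_1$, substituting $y=\bar{x}_1$ in the inequality for $\bar{x}_2$, and adding gives $\|\bar{x}_1-\bar{x}_2\|^2\le 0$.

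For (iii), I would set $u=P_C[x]$ and $v=P_C[y]$ and apply the characterization in (ii) twice: once with base point $u$ and test vector $v\in C$, giving $(v-u)^\top(u-x)\ge 0$, and once with base point $v$ and test vector $u\in C$, giving $(u-v)^\top(v-y)\ge 0$. Adding these two inequalities and regrouping yields $(u-v)^\top\big[(x-y)-(u-v)\big]\ge 0$, which is exactly $\|u-v\|^2\le\big(P_C[x]-P_C[y]\big)^\top(x-y)$.

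Part (i) is then immediate: applying the Cauchy--Schwarz inequality to the right-hand side of (iii) gives $\|u-v\|^2\le\|u-v\|\,\|x-y\|$, and dividing by $\|u-v\|$ (the case $u=v$ being trivial) yields $\|P_C[x]-P_C[y]\|\le\|x-y\|$.

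I do not expect a genuine obstacle here; the only points requiring a little care are the limiting step $t\downarrow 0$ in (ii) — equivalently, recognizing that the first-order optimality condition for minimizing a differentiable convex function over a convex set is precisely the stated variational inequality — and keeping straight which vector plays the role of ``base point'' versus ``test point'' when invoking (ii) inside the proof of (iii). Existence and uniqueness of $P_C[x]$ itself I would take as given, since they are recalled in the text immediately preceding the statement.
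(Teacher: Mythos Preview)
Your argument is correct and is the standard proof of these projection properties. Note, however, that the paper does not actually prove this lemma: it is simply quoted from \cite[Theorem~1.5.5]{fapa2003} and stated without proof, so there is no ``paper's own proof'' to compare against. What you have written is a self-contained justification that the cited reference would supply; the order you chose (establish the variational characterization~(ii) first, then derive~(iii) by adding the two instances of~(ii), then get~(i) from~(iii) via Cauchy--Schwarz) is exactly the natural route and matches how this material is typically developed in the literature.
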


If $C = \{x\in \mathbb{R}^n: x\ge0\}$, the result of Lemma~\ref{ty} (iii) can be generalized as follows.
\begin{lem}\label{pro}
Let $D={\rm diag}(d_{i}) \in \mathbb{R}^{n \times n}$ be a positive definite diagonal matrix. Assume that $C = \{x\in \mathbb{R}^n: x\ge0\}= \bar{C}^n$ with $\bar{C}=\{x\in \mathbb{R}: x\ge0\}$. Then for any $x, y \in \mathbb{R}^{n}$, we have
\begin{equation*}
	(P_{C}[x] - P_{C}[y])^{\top}D(P_{C}[x] - P_{C}[y]) \leq \left[P_{C}(x) - P_{C}(y)\right]^{\top} D(x - y).
\end{equation*}
\end{lem}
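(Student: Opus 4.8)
The plan is to exploit the fact that $C$ is a Cartesian product, so that the Euclidean projector onto $C$ decouples coordinatewise, and then reduce the claimed $D$-weighted inequality to the scalar case already covered by Lemma~\ref{ty}(iii).

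First I would record the separability of the projection. Since $C = \bar{C}^n$ with $\bar{C}=\{t\in\mathbb{R}: t\ge 0\}$, for every $z\in\mathbb{R}^n$ one has $(P_{C}[z])_i = P_{\bar{C}}[z_i] = \max\{z_i,0\}$ for $i=1,\dots,n$. This is immediate from the definition of the Euclidean projection, because $\|z-w\|^2 = \sum_{i=1}^n (z_i-w_i)^2$ is minimized over $w\in\bar{C}^n$ by minimizing each summand separately over $\bar{C}$.

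Next, for each fixed $i$ I would apply Lemma~\ref{ty}(iii) with the ambient space $\mathbb{R}$ and the closed convex set $\bar{C}\subset\mathbb{R}$, to the pair of scalars $x_i,y_i$, obtaining
\[
\bigl(P_{\bar{C}}[x_i] - P_{\bar{C}}[y_i]\bigr)^2 \le \bigl(P_{\bar{C}}[x_i] - P_{\bar{C}}[y_i]\bigr)(x_i - y_i), \qquad i=1,\dots,n.
\]
Multiplying the $i$-th inequality by $d_i>0$ and summing over $i$, the left-hand side becomes $\sum_{i=1}^n d_i\bigl(P_{\bar{C}}[x_i]-P_{\bar{C}}[y_i]\bigr)^2 = (P_{C}[x]-P_{C}[y])^\top D(P_{C}[x]-P_{C}[y])$ and the right-hand side becomes $\sum_{i=1}^n d_i\bigl(P_{\bar{C}}[x_i]-P_{\bar{C}}[y_i]\bigr)(x_i-y_i) = (P_{C}[x]-P_{C}[y])^\top D(x-y)$, which is exactly the assertion.

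There is essentially no hard step here; the only points to be careful about are correctly invoking the coordinatewise form of the projector (which relies on $C$ being the nonnegative orthant, hence a product set, and on $D$ being diagonal, so that the $D$-weighted quadratic form is a nonnegative combination of the coordinatewise terms) and keeping the orientation of the scalar inequality straight. If one prefers not to cite Lemma~\ref{ty}(iii) in dimension one, the scalar inequality $(\max\{a,0\}-\max\{b,0\})^2 \le (\max\{a,0\}-\max\{b,0\})(a-b)$ can be checked directly in the cases $a,b\ge 0$ (equality), $a,b<0$ (both sides zero), and $a\ge 0>b$ or $b\ge 0>a$ (the cross term is nonnegative).
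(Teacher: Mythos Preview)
Your proposal is correct and follows essentially the same route as the paper: both arguments use the coordinatewise separability of $P_C$ on the nonnegative orthant to reduce to a scalar inequality, multiply by $d_i>0$, and sum. The only cosmetic difference is that you invoke Lemma~\ref{ty}(iii) directly in dimension one, whereas the paper rederives that same scalar inequality from the variational characterization in Lemma~\ref{ty}(ii).
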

\begin{proof}
Let $x$ and $y$ be two arbitrary vectors in $\mathbb{R}^n$. By Lemma~\ref{ty}~(ii), we have
\begin{equation}\label{ie:pr1}
(P_{\bar{C}}[x_i] - P_{\bar{C}}[y_i]) (P_{\bar{C}}[y_i] - y_i) \ge 0,\quad i=1,2,\cdots, n
\end{equation}
and
\begin{equation}\label{ie:pr2}
(P_{\bar{C}}[y_i] - P_{\bar{C}}[x_i]) (P_{\bar{C}}[x_i] - x_i) \ge 0,\quad i=1,2,\cdots, n.
\end{equation}
It follows respectively from \eqref{ie:pr1} and \eqref{ie:pr2} that
\begin{equation}\label{ie:pr1d}
(P_{\bar{C}}[x_i] - P_{\bar{C}}[y_i])d_i (P_{\bar{C}}[y_i] - y_i) \ge 0,\quad i=1,2,\cdots, n
\end{equation}
and
\begin{equation}\label{ie:pr2d}
(P_{\bar{C}}[y_i] - P_{\bar{C}}[x_i])d_i (P_{\bar{C}}[x_i] - x_i) \ge 0,\quad i=1,2,\cdots, n
\end{equation}
with $d_i>0\,(i=1,2,\cdots, n)$. Adding \eqref{ie:pr1d} and \eqref{ie:pr2d} and rearranging terms, we immediately obtain
\begin{equation}\label{ie:con1}
(P_{\bar{C}}[x_i] - P_{\bar{C}}[y_i])d_i (P_{\bar{C}}[x_i] - P_{\bar{C}}[y_i]) \le (P_{\bar{C}}[x_i] - P_{\bar{C}}[y_i])d_i(x_i-y_i),\quad i=1,2,\cdots, n.
\end{equation}
Then the result follows from \eqref{ie:con1} and $P_C[x] = (P_{\bar{C}}[x_1],P_{\bar{C}}[x_2],\cdots, P_{\bar{C}}[x_n])^\top$.
\end{proof}

Before ending this section, we introduce the following integral inequality which plays a key role for the result of the next section.

\begin{lem}\label{Gu}(\cite[Lemma~1]{fela2011})
For any symmetric positive definite matrix $\mathcal{K} \in \mathbb{R}^{n \times n}$ and vector-valued function $\omega: [a,b] \rightarrow \mathbb{R}^{n}$ with $b > a$ such that the following integrations are well defined, then
\begin{equation*}
(b-a) \left[\int_{a}^{b} \left(\omega(s)\right)^{\top}\mathcal{K}\omega(s)ds\right] \geq \left[\int_{a}^{b}\omega(s)ds\right]^{\top}\mathcal{K}\left[\int_{a}^{b}\omega(s)ds\right].
\end{equation*}
\end{lem}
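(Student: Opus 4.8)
The plan is to reduce the claimed matrix inequality to the familiar scalar Cauchy--Schwarz inequality by absorbing $\mathcal{K}$ into the integrand. Since $\mathcal{K}$ is symmetric positive definite, it admits a symmetric positive definite square root $\mathcal{K}^{1/2}$ (spectral theorem); setting $v(s) = \mathcal{K}^{1/2}\omega(s)$ we have $\omega(s)^{\top}\mathcal{K}\omega(s) = \|v(s)\|^{2}$, and since $\mathcal{K}^{1/2}$ is a fixed linear map it commutes with integration, so $\big[\int_{a}^{b}\omega(s)\,ds\big]^{\top}\mathcal{K}\big[\int_{a}^{b}\omega(s)\,ds\big] = \big\|\int_{a}^{b}v(s)\,ds\big\|^{2}$. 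Hence it suffices to prove $(b-a)\int_{a}^{b}\|v(s)\|^{2}\,ds \ge \big\|\int_{a}^{b}v(s)\,ds\big\|^{2}$.

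For this last inequality I would argue componentwise: writing $v(s) = (v_{1}(s),\dots,v_{n}(s))^{\top}$, the scalar Cauchy--Schwarz inequality on $[a,b]$ (pairing $v_{i}$ against the constant function $1$) gives $\big(\int_{a}^{b}v_{i}(s)\,ds\big)^{2} \le (b-a)\int_{a}^{b}v_{i}(s)^{2}\,ds$ for each $i$, and summing over $i$ yields exactly the desired bound. An equivalent, slightly more self-contained route avoids the square root: for all $s,t\in[a,b]$ one has $(\omega(s)-\omega(t))^{\top}\mathcal{K}(\omega(s)-\omega(t)) \ge 0$ since $\mathcal{K}\succeq 0$; expanding and integrating this over $[a,b]\times[a,b]$ in both variables, the two diagonal terms each contribute $(b-a)\int_{a}^{b}\omega(s)^{\top}\mathcal{K}\omega(s)\,ds$ while the cross term contributes $-2\big[\int_{a}^{b}\omega\big]^{\top}\mathcal{K}\big[\int_{a}^{b}\omega\big]$, and rearranging gives the claim.

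There is no deep obstacle here; the only points needing a word of care are technical. In the first approach one must invoke the existence of a symmetric positive definite square root of $\mathcal{K}$; in the second, one interchanges the order of integration (Fubini) and commutes the fixed matrix $\mathcal{K}$ past the integrals — both are legitimate under the standing hypothesis that the relevant integrals are well defined. I would therefore present the proof in just a few lines, most likely via the $\mathcal{K}^{1/2}$ substitution followed by componentwise Cauchy--Schwarz.
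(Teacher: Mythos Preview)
Your argument is correct; both the $\mathcal{K}^{1/2}$ reduction to componentwise Cauchy--Schwarz and the double-integration of $(\omega(s)-\omega(t))^{\top}\mathcal{K}(\omega(s)-\omega(t))\ge 0$ are standard and valid proofs of this Jensen-type integral inequality. Note, however, that the paper does not actually prove this lemma: it is stated with a citation to \cite[Lemma~1]{fela2011} and used as a black box, so there is no ``paper's own proof'' to compare against. Your write-up therefore supplies what the paper omits.
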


\section{The neural network model with mixed delays}\label{sec:nn}
In this section, we will develop a neural network model with mixed delays for solving the AVE~\eqref{eq:ave} and then carry out its properties.

In \cite{mang2006}, Mangasarian shows that the AVE~\eqref{eq:ave} is equivalent with the generalized
LCP~(GLCP) of finding an $x \in \mathbb{R}^{n}$ such that
\begin{equation}\label{eq:glcp}
Q(x) \dot{=} Ax + x - b \geq 0, \quad F(x) \dot{=} Ax-x-b \geq 0 \quad \text{and} \quad  \left\langle Q(x),F(x) \right\rangle =0.
\end{equation}
In addition, GLCP \eqref{eq:glcp} can be equivalently reformulated as the following extended linear  variational inequality (ELVI) problem \cite{gao2001}:
\begin{equation}\label{eq:gvi}
\text{find an}~x^*\in \mathbb{R}^n~\text{such that}~Q(x^*)\in \Omega, ~ \left\langle v-Q(x^*),F(x^*)\right\rangle\ge 0,\; \forall\, v\in \Omega,
\end{equation}
where $\Omega = \{x\in \mathbb{R}^n: x\ge0\}$. Moreover, $x^*$ is a solution of ELVI \eqref{eq:gvi} if and only if it satisfies the following generalized linear projection equation
\begin{equation*}
Q(x) = P_{\Omega}\left[Q(x)-F(x)\right].
\end{equation*}
Based on this observation, the projection neural network with mixed delays for solving the linear variational inequality \cite{hhgw2014} is extended to solve the ELVI \eqref{eq:gvi} or the AVE~\eqref{eq:ave}. Specifically, we propose the following neural network model with mixed delays  (NNMMD) for solving the AVE~\eqref{eq:ave}:
\begin{equation}\label{eq:mtdnn}\footnotesize
\begin{cases}
\frac{{\rm{d}}x(t)}{{\rm{d}}t}= g(t,x_t)\doteq -[A + (2+\tau_2)I] x_t(0) + P_{\Omega}[2x_t(0)] + x_t(-\tau_1) + \int_{-\tau_2}^0x_t(s){\rm d}s + b,\quad t \in [0, +\infty),\\
x(t) =\phi(t),\quad t \in \left[- \tau_M, 0\right],
\end{cases}
\end{equation}
where $\tau_1,\tau_2 \ge 0$, $\tau_M=\max\{\tau_1,\tau_2\}$, and $\phi(t) \in \mathcal{C}_{\tau_M}$. Since $x_{t}(\xi)= x(t+\xi)$, it is clear that
 \begin{equation}\label{eq:gtt}
 g(t,x_t) = -[A + (2+\tau_2)I] x(t) + P_{\Omega}[2x(t)] + x(t-\tau_1) + \int_{t-\tau_2}^tx(s){\rm d}s + b.
 \end{equation}
Moreover, it can be concluded that a constant vector $x^*$ is a solution of the AVE~\eqref{eq:ave} if and only if it is an equilibrium point of the NNMMD~\eqref{eq:mtdnn}.

In the following, we will study the existence and uniqueness of the solution to the NNMMD~\eqref{eq:mtdnn}.

\begin{lem}\label{solution}
For each $\phi(t) \in \mathcal{C}_{\tau_M}$, there exists a unique continuous solution to the  NNMMD~\eqref{eq:mtdnn} in the global time interval $[0, +\infty)$.
\end{lem}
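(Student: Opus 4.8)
The plan is to invoke the classical existence--uniqueness theory for retarded functional differential equations of the form \eqref{ds} (as in \cite{guck2003}) to obtain a local solution, and then to upgrade it to a global one by means of an a priori bound combined with the blow-up alternative.

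First I would verify that the functional $g(t,\cdot):\mathcal{C}_{\tau_M}\to\mathbb{R}^n$ appearing in \eqref{eq:mtdnn} is continuous in $(t,\varphi)$ and globally Lipschitz in $\varphi$ with respect to $\|\cdot\|_s$, uniformly in $t$. Treating $g$ term by term: the maps $\varphi\mapsto -[A+(2+\tau_2)I]\varphi(0)$ and $\varphi\mapsto\varphi(-\tau_1)$ are linear and bounded with norms at most $\|A+(2+\tau_2)I\|$ and $1$, respectively; the map $\varphi\mapsto\int_{-\tau_2}^0\varphi(s)\,{\rm d}s$ is linear with norm at most $\tau_2$; and, by the nonexpansiveness of the Euclidean projector (Lemma~\ref{ty}(i)), the map $\varphi\mapsto P_\Omega[2\varphi(0)]$ obeys $\|P_\Omega[2\varphi(0)]-P_\Omega[2\psi(0)]\|\le 2\|\varphi-\psi\|_s$. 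Hence $g(t,\cdot)$ is Lipschitz with a constant $L:=\|A+(2+\tau_2)I\|+\tau_2+3$ independent of $t$. The standard theorem (Picard iteration in $\mathcal{C}_{\tau_M}$) then produces a unique continuous solution $x(t,\phi)$ of \eqref{eq:mtdnn} on a maximal interval $[0,T)$ with $0<T\le+\infty$.

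It remains to prove $T=+\infty$. By the continuation principle for functional differential equations, if $T<+\infty$ then $\limsup_{t\to T^-}\|x(t)\|=+\infty$, so it is enough to exhibit an a priori bound on $\|x(t)\|$ over $[0,T)$. From the integral form $x(t)=\phi(0)+\int_0^t g(s,x_s)\,{\rm d}s$ and the estimate $\|g(s,x_s)\|\le\|g(s,0)\|+L\|x_s\|_s=\|b\|+L\|x_s\|_s$ (note $P_\Omega[0]=0$ since $0\in\Omega$), setting $u(t):=\sup_{-\tau_M\le\xi\le t}\|x(\xi)\|$ yields $u(t)\le\|\phi\|_s+\|b\|\,t+L\int_0^t u(s)\,{\rm d}s$ for all $t\in[0,T)$. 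Gr\"onwall's inequality then gives $u(t)\le(\|\phi\|_s+\|b\|\,T)\,{\rm e}^{LT}<+\infty$ on $[0,T)$, contradicting the blow-up alternative; therefore $T=+\infty$ and the solution is global.

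I expect the only delicate point to be the bookkeeping in the global-extension step — specifically, passing from the pointwise bound on $\|x(t)\|$ to a bound on the segment norm $\|x_s\|_s$ so that Gr\"onwall's inequality applies, and correctly invoking the continuation/blow-up alternative for retarded functional differential equations. The Lipschitz verification itself is routine once the projection term is controlled via Lemma~\ref{ty}(i).
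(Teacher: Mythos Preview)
Your proposal is correct, and the Lipschitz verification via Lemma~\ref{ty}(i) matches the paper's. The route to global existence, however, differs. You argue via the blow-up alternative plus Gr\"onwall: from the linear growth $\|g(s,x_s)\|\le\|b\|+L\|x_s\|_s$ you extract an a priori bound on the running supremum $u(t)$, which rules out finite-time blow-up. The paper instead packages the same linear growth as a Wintner-type condition --- bounding $\|g(t,\psi_t)\|\le\eta(\|\psi_t\|_s)$ with $\eta(r)=(\|A\|+5+2\tau_2)r+\|b\|$ and checking $\int^\infty\frac{{\rm d}r}{\eta(r)}=\infty$ --- and then invokes Theorems~1.1 and~1.2 of \cite{Kharitonov2012} as a black box. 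The paper also devotes a paragraph to showing that $t\mapsto g(t,x_t)$ is continuous for any continuous $x$, a hypothesis Kharitonov's theorems require; you sidestep this, which is harmless since the functional in \eqref{eq:mtdnn} is autonomous. Your argument is more self-contained and avoids an external reference; the paper's is shorter once Kharitonov's theorems are granted but demands knowing their precise hypotheses.
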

\begin{proof}
For any $t\in [0,\infty)$ and $\psi_t \in \mathcal{C}_{\tau_M}$ with $\|\psi_t\|_s\le H$, we have
\begin{align}\nonumber
\| g(t,\psi_t)\| \leq &\left\| [A + (2+\tau_2)I]\psi_{t}(0)\right\| + 2\|\psi_{t}(0)\| + \|\psi_{t}(-\tau_1)\|
+\int_{-\tau_{2}}^{0}\left\|\psi_{t}(s)\right \| {\rm d}s +\|b\| \\\label{ie:psi}
\leq &(\| A \| + 5 +2\tau_{2}) \|\psi_t \|_{h}+\|b\|\\\nonumber
\leq &(\| A \| + 5 +2\tau_{2}) H +\|b\|.
\end{align}
We should mention that the first inequality use the fact that $\|P_{\Omega}[x]\|\le \|x\|$. It follows from \eqref{ie:psi} that
$$
\| g(t,\psi_t)\| \leq \eta(\|\psi_t\|_h)
$$
for $t\ge 0$ and $\psi_t \in \mathcal{C}_{\tau_M}$, where $\eta(r) = (\| A \| + 5 +2\tau_{2}) r +\|b\|$ with $r\in [0,\infty)$ is continuous and increasing, and for any $r_{0} \geq 0$, we have
\begin{equation*}
\lim\limits_{R \rightarrow \infty} \int_{r_{0}}^{R}\frac{{\rm d}r}{\eta(r)}=\lim\limits_{R \rightarrow \infty} \int_{r_{0}}^{R}\frac{{\rm d}r}{(\|A\|+5+2\tau_{2})r+\|b\|}=\infty.
\end{equation*}

In the following, we will show that the functional $g$ is Lipschitz continuous with respect to the second argument. To this end, for any $t\ge 0$ and $\phi_t^{1}, \phi_t^{2} \in \mathcal{C}_{\tau_M}$, it follows from the triangle inequality and Lemma~\ref{ty}~(i) that
\begin{align*}
\left\| g(t,\phi_t^{1}) - g(t, \phi_t^{2}) \right\| & \leq \left\| [A + (2+\tau_2)I][\phi_t^{1}(0) - \phi_t^{2}(0)] \right\| + 2\| \phi_t^{1}(0) - \phi_t^{2}(0) \|  \\
&\quad + \|\phi_t^{1}(-\tau_1) - \phi_t^{2}(-\tau_1)\|
    +\int_{-\tau_{2}}^{0}\left\|\phi_t^{1}(s)-\phi_t^{2}(s) \right \| {\rm d}s \\
	&\leq (\| A \| + 5 +2\tau_{2}) \|\phi_t^{1} - \phi_t^{2} \|_{s},
	\end{align*}
which implies that the functional $g$ is Lipschitz continuous with respect to the second argument with Lipschitz constant $\| A \| + 5 +2\tau_{2}$. This fact also means that the functional $g$ is continuous with respect to the second argument.

Finally, according to \cite[Theorem~1.1. and Theorem~1.2.]{Kharitonov2012}, the proof is completed by showing that the functional $g$ is continuous with respect to the first argument. Without loss of generality, let $0\le t_1 < t_2$ and $x_{t_1}, x_{t_2} \in \mathcal{C}_{\tau_M}$ with $x$ being a continuous function in~$\mathbb{R}$. Then $x_{t_2}$ is uniformly continuous in $[-\tau_M,0]$, and the uniformly continuous interval can be extended to $[-(\tau_M+\delta),0]$ for some $\delta>0$ since $x$ is continuous in $\mathbb{R}$. Hence, for any $\bar{\epsilon}>0$, there is a $\bar{\delta}>0$ such that for any $\theta_1,\theta_2\in [-(\tau_M+\delta),0]$ with $|\theta_1-\theta_2|<\bar{\delta}$, we have $\|x_{t_2}(\theta_1)-x_{t_2}(\theta_2)\|<\bar{\epsilon}$. Then for any $\epsilon = (\|A\|+5+2\tau_{2}) \bar{\epsilon} > 0$, there is a $0< \delta\le \bar{\delta}$ such that $|t_1- t_2| < \delta$ implies
\begin{align*}
 \|g(t_{1},x_{t_{1}})- g(t_{2},x_{t_{2}})\| &= \Big\| [A + (2+\tau_2)I] \left[x_{t_{2}}(0)-x_{t_{1}}(0)\right] + P_{\Omega}[2x_{t_{1}}(0)]-P_{\Omega}[2x_{t_{2}}(0)]\\
 &\quad + x_{t_{1}}(-\tau_1)-x_{t_{2}}(-\tau_1) + \int_{-\tau_2}^0 \left[x_{t_{1}}(s)-x_{t_{2}}(s)\right]{\rm d}s \Big\| \\
 &=\Big\| [A + (2+\tau_2)I] \left[x_{t_{2}}(0)-x_{t_{2}}(t_1 - t_2)\right] + P_{\Omega}[2x_{t_{2}}(t_1-t_2)]-P_{\Omega}[2x_{t_{2}}(0)]\\
 &\quad + x_{t_{2}}(t_1 - t_2-\tau_1)-x_{t_{2}}(-\tau_1) + \int_{-\tau_2}^0 \left[x_{t_{2}}( t_1 - t_2 + s)-x_{t_{2}}(s)\right]{\rm d}s \Big\| \\
 &< (\|A\|+5+2\tau_{2})\bar{\epsilon}=\epsilon.
\end{align*}
Hence, the functional $g$ is continuous with respect to $t$.
\end{proof}	
		
Now we are in the position to consider the stability of the equilibrium point of the NNMMD~\eqref{eq:mtdnn}.

\begin{theorem}\label{stable:mtdnn}
If there exist three symmetric positive-definite matrices $P \in \mathbb{R}^{n \times n }$, $Q \in \mathbb{R}^{n \times n}$ and $H \in \mathbb{R}^{n \times n}$, and a positive-definite diagonal matrix $D={\rm diag}(d_{i}) \in \mathbb{R}^{n \times n}$, and two matrices $T_{1}$ and $T_{2}\in \mathbb{R}^{n \times n}$ as well as a constant $k>0$ such that
	\begin{equation}\label{2LMI}
	\Pi=
	\left[
	\begin{array}{cccccc}
	\Pi_{11} & \Pi_{12} & \Pi_{13} & \Pi_{14}  & P \\
	\circ & \Pi_{22} & \Pi_{23} & T_{1}^{\top}  & 0 \\
	\circ & \circ & \Pi_{33} & \Pi_{34}  & \Pi_{35} \\
	\circ & \circ & \circ & \Pi_{44}  & \Pi_{45} \\
	\circ & \circ & \circ & \circ  & \Pi_{55} \\
	\end{array}
	\right] \prec 0,
	\end{equation}
then the equilibrium point of the NNMMD~\eqref{eq:mtdnn} is exponentially stable. In \eqref{2LMI}, $\circ$ denotes entries which can be readily inferred by symmetry and
\begin{equation*}
\begin{array}{ll}
\Pi_{11}=(2k-4-2 \tau_{2} )P-A^{\top}P-PA + Q + \tau_{2} H , &\Pi_{12}= P, \\
\Pi_{13}=P+2kD-A^{\top}D-\tau_{2} D -(A+2I+\tau_{2}I)^{\top}T_{2}^{\top},&
\Pi_{14}= -(A+2I+\tau_{2}I)^{\top}T_{1}^{\top},\\
\Pi_{22}=-{\rm e}^{-2k\tau_{1}}Q,& \Pi_{23}=D+T_{2}^{\top},    \\
\Pi_{33}= -2D+T_{2}+T_{2}^{\top}, &\Pi_{34}= T_{1}^{\top}-T_{2},  \\
\Pi_{35}= T_{2}+D,  &\Pi_{44}= -T_{1}-T_{1}^{\top},    \\
\Pi_{45}= T_{1},  &\Pi_{55}=-\frac{{\rm e}^{-2k\tau_{2}}}{\tau_{2}}H.
\end{array}
\end{equation*}
\end{theorem}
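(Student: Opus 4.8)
The strategy is the classical Lyapunov–Krasovskii approach combined with the S-procedure/LMI machinery. Let $x^*$ be the equilibrium point of the NNMMD~\eqref{eq:mtdnn} (equivalently a solution of the AVE~\eqref{eq:ave}), and introduce the error variable $z(t) = x(t) - x^*$. Using the equilibrium identity $g(t,x^*_t)\equiv 0$ together with the nonexpansiveness of $P_\Omega$ and Lemma~\ref{pro}, one rewrites $\dot z(t)$ in terms of $z(t)$, the delayed term $z(t-\tau_1)$, the distributed term $\int_{t-\tau_2}^t z(s)\,{\rm d}s$, and the "activation-type" mismatch term $w(t) \doteq P_\Omega[2x(t)] - P_\Omega[2x^*]$. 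The key sector/monotonicity property supplied by Lemma~\ref{pro} (applied with $x=2x(t)$, $y=2x^*$, and the positive diagonal matrix $D$) is the inequality $w(t)^\top D\, w(t) \le w(t)^\top D\,(2z(t))$, i.e. $w(t)^\top D\big(w(t) - 2z(t)\big)\le 0$; this, and variants obtained by inserting the free matrices $T_1,T_2$ via the identity $0 = 2\,[\,\text{(something)}^\top T_i\,](\dot z(t) + \text{dynamics} )$, will furnish the off-diagonal blocks $\Pi_{13},\Pi_{14},\Pi_{23},\Pi_{33},\Pi_{34},\Pi_{44}$ of $\Pi$.

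Next I would propose the Lyapunov–Krasovskii functional
\[
V(z_t) = {\rm e}^{2kt}\,z(t)^\top P z(t) + \int_{t-\tau_1}^t {\rm e}^{2ks}\,z(s)^\top Q z(s)\,{\rm d}s + \int_{-\tau_2}^0\!\!\int_{t+\theta}^t {\rm e}^{2ks}\,z(s)^\top H z(s)\,{\rm d}s\,{\rm d}\theta,
\]
which is exactly where the weights $P,Q,H$ and the exponential factor ${\rm e}^{2kt}$ enter; note this functional does \emph{not} involve $\dot\phi$, which is the simplification over \cite[Theorem~1]{hhgw2014} advertised in contribution~(ii). Differentiating along trajectories, the term $\frac{{\rm d}}{{\rm d}t}\big({\rm e}^{2kt}z^\top Pz\big)$ produces $2k\,{\rm e}^{2kt}z^\top Pz + {\rm e}^{2kt}(2z^\top P\dot z)$; the $Q$-integral yields ${\rm e}^{2kt}z(t)^\top Qz(t) - {\rm e}^{2k(t-\tau_1)}z(t-\tau_1)^\top Qz(t-\tau_1)$, explaining the $+Q$ in $\Pi_{11}$ and the $-{\rm e}^{-2k\tau_1}Q$ in $\Pi_{22}$; the double integral contributes $\tau_2\,{\rm e}^{2kt}z(t)^\top Hz(t) - \int_{t-\tau_2}^t {\rm e}^{2ks}z(s)^\top Hz(s)\,{\rm d}s$, giving the $+\tau_2 H$ in $\Pi_{11}$. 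The crucial estimate here is Lemma~\ref{Gu} (Jensen's integral inequality): it bounds $-\int_{t-\tau_2}^t {\rm e}^{2ks}z(s)^\top Hz(s)\,{\rm d}s$ from above by $-\frac{{\rm e}^{-2k\tau_2}}{\tau_2}\big(\int_{t-\tau_2}^t z(s)\,{\rm d}s\big)^\top H\big(\int_{t-\tau_2}^t z(s)\,{\rm d}s\big)$, which is precisely the $\Pi_{55}$ block. Substituting $2z^\top P\dot z$ using \eqref{eq:gtt} expands into the $A$-dependent entries $\Pi_{11},\Pi_{12},\Pi_{13},\Pi_{14},\Pi_{15}=P$.

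Collecting everything, one assembles the augmented vector $\zeta(t) = \big(z(t),\, z(t-\tau_1),\, w(t),\, \dot z(t),\, \int_{t-\tau_2}^t z(s)\,{\rm d}s\big)^\top$ and shows $\dot V(z_t) \le {\rm e}^{2kt}\,\zeta(t)^\top \Pi\,\zeta(t)$, so that $\Pi\prec 0$ gives $\dot V \le 0$, hence $V(z_t)\le V(z_0)$ for all $t\ge 0$. Bounding $V(z_0)$ above by a constant multiple of $\|\phi - x^*\|_s^2$ (using $\lambda_{\max}$ of $P,Q,H$ and the lengths $\tau_1,\tau_2$) and $V(z_t)$ below by ${\rm e}^{2kt}\lambda_{\min}(P)\|z(t)\|^2$, one extracts $\|z(t)\| \le \gamma\,\|\phi-x^*\|_s\,{\rm e}^{-kt}$ with $\gamma = \sqrt{V(z_0)/(\lambda_{\min}(P)\|\phi-x^*\|_s^2)}$, which is exponential stability in the sense of Definition~\ref{exponentially}. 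The main obstacle I anticipate is bookkeeping: correctly incorporating the free-weighting matrices $T_1,T_2$ so that $\dot z(t)$ appears as a genuine free coordinate (the "descriptor" trick), matching every cross term to the stated $\Pi_{ij}$, and carefully tracking the exponential weights ${\rm e}^{-2k\tau_1}, {\rm e}^{-2k\tau_2}$ through the Jensen step and the $Q$-integral boundary term — a single sign or factor-of-two slip there propagates through the whole matrix. A minor additional point is handling the degenerate case $\tau_2 = 0$ (where the distributed term and the $H$-integral vanish and $\Pi_{55}$ must be interpreted as a limit), which reduces the model to the discrete-delay special case.
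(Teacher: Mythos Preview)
Your overall architecture---error variable, exponentially weighted Lyapunov--Krasovskii functional, Jensen's inequality (Lemma~\ref{Gu}) for the distributed term, and the free-weighting identity $0=2[\dot z^\top T_1+w^\top T_2](\cdot)$---matches the paper. But your proposed functional
\[
V(z_t)={\rm e}^{2kt}z^\top Pz+\int_{t-\tau_1}^t{\rm e}^{2ks}z^\top Qz\,{\rm d}s+\int_{-\tau_2}^0\!\!\int_{t+\theta}^t{\rm e}^{2ks}z^\top Hz\,{\rm d}s\,{\rm d}\theta
\]
is missing a piece, and this is a genuine gap. The paper's functional also carries the Lur'e-type term
\[
2{\rm e}^{2kt}\sum_{i=1}^n d_i\int_{x_i^*}^{x_i(t)}\big(P_\Omega[2z_i]-P_\Omega[2x_i^*]\big)\,{\rm d}z_i,
\]
and it is \emph{this} term---not the sector inequality used as an S-procedure multiplier---that generates the $D$-dependent entries of $\Pi$. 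Differentiating it produces (i) $4k{\rm e}^{2kt}\sum d_i\int(\cdot)$, which is bounded by $4k{\rm e}^{2kt}z^\top Dw$ via the monotonicity inequality~\eqref{ineq}, giving the $2kD$ in $\Pi_{13}$; and (ii) $2{\rm e}^{2kt}\dot z^\top Dw$, which after substituting the dynamics yields the $-A^\top D-\tau_2 D$ in $\Pi_{13}$, the $D$ in $\Pi_{23}$, and the $D$ in $\Pi_{35}$. Lemma~\ref{pro} is then applied once to the resulting $-4z^\top Dw+2w^\top Dw$ combination to produce the $-2D$ in $\Pi_{33}$.

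By contrast, your plan invokes Lemma~\ref{pro} only as the single S-procedure constraint $w^\top D(w-2z)\le 0$. Adding a nonnegative multiple of $2z^\top Dw-w^\top Dw$ to $\dot V$ contributes at most $+2D$ to the $(1,3)$ block and $-2D$ to the $(3,3)$ block; it cannot manufacture the $k$-, $A$-, and $\tau_2$-dependent pieces $2kD-A^\top D-\tau_2 D$ in $\Pi_{13}$, nor the bare $D$ in $\Pi_{23}$ and $\Pi_{35}$. So with your functional you would arrive at an LMI different from \eqref{2LMI}, and you could not conclude that \eqref{2LMI}$\prec 0$ implies stability. Add the Lur'e integral term to $V$, bound it above via \eqref{ineq} when estimating $V(x(0))$ (this is where the $2\max(d_i)$ in the paper's constant $\gamma$ comes from), and use Lemma~\ref{pro} only at the single step where $-4z^\top Dw+2w^\top Dw$ appears after substituting the dynamics.
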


\begin{proof}
Let $x^{*}\in \mathbb{R}^n$ be an equilibrium point of the NNMMD~\eqref{eq:mtdnn}, then we have
\begin{equation}\label{eq:equi}
	-(A+2I)x^{*} + P_{\Omega}[2x^{*}] + x^{*} + b + \int_{t - \tau}^{t}x^{*} {\rm d}s - \tau x^{*}=0.
\end{equation}
Constructing the following Lyapunov-Krasovskii functional
\begin{align*}
V(x(t)) &= {\rm e}^{2kt} (x(t) - x^{*})^{\top} P (x(t) - x^{*}) + 2{\rm e}^{2kt} \sum\limits_{i=1}^{n}  d_{i} \int_{x_{i}^{*}}^{x_{i}(t)} (P_{\Omega}[2z_{i}]-P_{\Omega}[2x_{i}^{*}]) {\rm d}z_{i} \\\nonumber
&\quad + \int_{t - \tau_{1}}^{t} {\rm e}^{2ks} (x(s) - x^{*})^{\top} Q (x(s) - x^{*}){\rm d}s +\int_{t-\tau_{2}}^{t}\int_{\theta}^{t}{\rm e}^{2ks}(x(s) - x^{*})^{\top} H (x(s) - x^{*}){\rm d}s {\rm d} \theta.
\end{align*}
According to the definition of $P_{\Omega}$, we know that $P_{\Omega}$ is a nondecreasing function, from which we can prove that
\begin{align}\label{ineq}
	0 \leq \int_{x_{i}^{*}}^{x_{i}(t)} (P_{\Omega}[2z_{i}] - P_{\Omega}[2x_{i}^{*}]) {\rm d} z_{i}\leq (x_{i}(t) - x_{i}^{*}) (P_{\Omega}[2x_{i}(t)] - P_{\Omega}[2x_{i}^{*}]).
\end{align}
Indeed, on the one hand, if $x_{i}(t) \geq x_{i}^{*}$, then \eqref{ineq} holds. On the other hand, if $x_{i}^{*} \geq x_{i}(t)$, then we get
\begin{equation*}
	0 \geq \int_{x_{i}(t)}^{x_{i}^{*}} (P_{\Omega}[2z_{i}] - P_{\Omega}[2x_{i}^{*}]) {\rm d}z_{i} \geq -(x_{i}(t) - x_{i}^{*}) (P_{\Omega}[2x_{i}(t)] - P_{\Omega}[2x_{i}^{*}]),
\end{equation*}
which also implies \eqref{ineq}. It follows from \eqref{ineq} and the positive definiteness of the symmetric matrices $P,Q$ and $H$ that $V(x(t))\ge 0$ and $V(x(t)) = 0$ if and only if $x(t)\equiv x^*$.

In the following, we will show the derivative of the Lyapunov-Krasovskii functional $V$ along the solution of the NNMMD~\eqref{eq:mtdnn}. Firstly, we have
\begin{align*}
\frac{{\rm {d}}V(x(t))}{{\rm {d}}t} &=  2k{\rm e}^{2kt} (x(t) - x^{*})^{\top} P (x(t) - x^{*}) + 2{\rm e}^{2kt}\left(\frac{{\rm {d}}x(t)}{{\rm {d}}t}\right)^{\top} P (x(t) - x^{*}) \\
&\quad + 4k{\rm e}^{2kt} \sum\limits_{i=1}^{n} d_{i} \int_{x_{i}^{*}}^{x_{i}(t)} (P_{\Omega}[2z_{i}] - P_{\Omega}[2x_{i}^{*}]) {\rm d}z_{i} + 2{\rm e}^{2kt}\left(\frac{{\rm {d}}x(t)}{{\rm {d}}t}\right)^{\top} D (P_{\Omega}[2x(t)] - P_{\Omega}[2x^{*}])\\
&\quad + {\rm e}^{2kt} (x(t) - x^{*})^{\top} Q (x(t) - x^{*}) - {\rm e}^{2k(t - \tau_{1})} (x(t - \tau_{1}) - x^{*})^{\top} Q (x(t - \tau_{1}) - x^{*})\\
&\quad + \tau_{2} {\rm e}^{2kt}(x(t)-x^{*})^{\top}H(x(t)-x^{*})-\int_{t-\tau_{2}}^{t}{\rm e}^{2ks}(x(s)
-x^{*})^{\top}H(x(s)-x^{*}){\rm d}s,
\end{align*}
from which and \eqref{eq:gtt} we have
\begin{align}\nonumber
\frac{{\rm {d}}V(x(t))}{\rm {d}t}&=  2k{\rm e}^{2kt}(x(t) -x ^{*})^{\top} P (x(t) -x^{*})
+2{\rm e}^{2kt}{\Big[}(-A+2I)x(t) + P_{\Omega}[2x(t)] + b + x(t - \tau_{1})\\\nonumber
&\quad +\int_{t-\tau_{2}}^{t}x(s){\rm d}s-\tau_{2} x(t){\Big]}^{\top} P (x(t) - x^{*})\\\nonumber
&	 +   4k{\rm e}^{2kt} \sum\limits_{i=1}^{n} d_{i} \int_{x_{i}^{*}}^{x_{i}(t)}(P_{\Omega}[2z_{i}] - P_{\Omega}[2x_{i}^{*}]) {\rm d}z_{i}
+2{\rm e}^{2kt}{\Big[}(-A+2I)x(t) + P_{\Omega}[2x(t)] + b + x(t - \tau_{1})\\\nonumber
&\quad +\int_{t-\tau_{2}}^{t}x(s){\rm d}s-\tau_{2} x(t){\Big]}^{\top}D (P_{\Omega}[2x(t)] - P_{\Omega}[2x^{*}])\\\nonumber
	&\quad +  {\rm e}^{2kt} (x(t)- x^{*})^{\top} Q (x(t) - x^{*})- {\rm e}^{2k(t - \tau_{1})} (x(t - \tau_{1}) - x^{*})^{\top} Q (x(t - \tau_{1}) - x^{*})\\
	&\quad +\tau_{2} {\rm e}^{2kt}(x(t)-x^{*})^{\top}H(x(t)-x^{*})-\int_{t-\tau_{2}}^{t}{\rm e}^{2ks}(x(s)-x^{*})^{\top}H(x(s)-x^{*}){\rm d}s.\label{eq:demt}
\end{align}
Let $G(t) = P_{\Omega}[2x(t)] - P_{\Omega}[2x^{*}]$, then it can be obtained from \eqref{ineq} and \eqref{eq:demt} that
\begin{align}\nonumber
\frac{{\rm {d}}V(x(t))}{{\rm {d}}t}&\leq   2k{\rm e}^{2kt}(x(t) -x ^{*})^{\top} P (x(t) -x^{*}) +  4k{\rm e}^{2kt}(x(t)-x^{*})^{\top}DG(t)\\\nonumber
	&\quad +2{\rm e}^{2kt}\left[-(A+2I)x(t) + P_{\Omega}[2x(t)] + b + x(t - \tau_{1})+\int_{t-\tau_{2}}^{t}x(s){\rm d}s-\tau_{2} x(t)\right]^{\top} P (x(t) - x^{*})\\\nonumber
	&\quad +2{\rm e}^{2kt}\left[(-A+2I)x(t) + P_{\Omega}[2x(t)] + b + x(t - \tau_{1})+\int_{t-\tau_{2}}^{t}x(s){\rm d}s-\tau_{2} x(t)\right]^{\top}DG(t)\\\nonumber
	&\quad +  {\rm e}^{2kt} (x(t)- x^{*})^{\top} Q (x(t) - x^{*})- {\rm e}^{2k(t - \tau_{1})} (x(t - \tau_{1}) - x^{*})^{\top} Q (x(t - \tau_{1}) - x^{*})\\
	&\quad + \tau_{2} {\rm e}^{2kt}(x(t)-x^{*})^{\top}H(x(t)-x^{*})-{\rm e}^{2k(t-\tau_{2})}\int_{t-\tau_{2}}^{t}(x(s)-x^{*})^{T}H(x(s)-x^{*}){\rm d}s.\label{ie:de}
	\end{align}
Then it follows from \eqref{eq:equi} and \eqref{ie:de} that
\begin{align*}
	\frac{{\rm {d}}V(x(t))}{{\rm {d}}t} &\leq  2k{\rm e}^{2kt}(x(t)-x^{*})^{\top}P(x(t)-x^{*})+4k{\rm e}^{2kt}(x(t)-x^{*})^{\top}DG(t)\\
	&\quad -2{\rm e}^{2kt}(x(t)-x^{*})^{\top}(A+2I)^{\top}P(x(t)-x^{*})\\
	&\quad +2{\rm e}^{2kt}G^{\top}(t)P(x(t)-x^{*})+2{\rm e}^{2kt}(x(t-\tau_{1})-x^{*})^{\top}P(x(t)-x^{*})\\
	&\quad +2{\rm e}^{2kt}\left(\int_{t-\tau_{2}}^{t}(x(s)-x^{*}){\rm d}s\right)^{\top} P (x(t)-x^{*})-2{\rm e}^{2kt}\tau_{2} (x(t)-x^{*})^{\top}P(x(t)-x^{*})\\
	&\quad -2{\rm e}^{2kt}(x(t)-x^{*})^{\top}(A+2I)^{\top}DG(t)\\
	&+2{\rm e}^{2kt}G^{\top}(t)DG(t)+2{\rm e}^{2kt}(x(t-\tau_{1})-x^{*})^{\top}DG(t)\\
	&\quad +2{\rm e}^{2kt}\left(\int_{t-\tau_{2}}^{t}(x(s)-x^{*}){\rm d}s\right)^{\top} DG(t)-2{\rm e}^{2kt}\tau_{2} (x(t)-x^{*})^{\top}DG(t)\\
	&\quad  +  {\rm e}^{2kt} (x(t)- x^{*})^{\top} Q (x(t) - x^{*})- {\rm e}^{2k(t - \tau_{1})} (x(t - \tau_{1}) - x^{*})^{\top} Q (x(t - \tau_{1}) - x^{*})\\
	&\quad +\tau_{2} {\rm e}^{2kt}(x(t)-x^{*})^{\top}H(x(t)-x^{*})-{\rm e}^{2k(t-\tau_{2})}\int_{t-\tau_{2}}^{t}(x(s)-x^{*})^{\top}H(x(s)-x^{*}){\rm d}s.
\end{align*}
By employing Lemma \ref{Gu}, the following inequality holds
	\begin{equation*}
	-\int_{t-\tau_{2}}^{t} (x(s)-x^{*})^{\top}H(x(s)-x^{*}){\rm d}s \leq -\frac{1}{\tau_{2}}\left(\int_{t-\tau_{2}}^{t}(x(s)-x^{*}){\rm d}s\right)^{\top}H\int_{t-\tau_{2}}^{t}(x(s)-x^{*}){\rm d}s,
	\end{equation*}	
Therefore, we get
\begin{align*}
	\frac{{\rm {d}}V(x(t))}{{\rm {d}}t} &\leq  2k{\rm e}^{2kt}(x(t)-x^{*})^{\top}P(x(t)-x^{*})+4k{\rm e}^{2kt}(x(t)-x^{*})^{\top}DG(t)\\
	&\quad -2{\rm e}^{2kt}(x(t)-x^{*})^{\top}(A+2I)^{\top}P(x(t)-x^{*})\\
	&\quad +2{\rm e}^{2kt}G^{\top}(t)P(x(t)-x^{*})+2{\rm e}^{2kt}(x(t-\tau_{1})-x^{*})^{\top}P(x(t)-x^{*})\\
	&\quad +2{\rm e}^{2kt}\left(\int_{t-\tau_{2}}^{t}(x(s)-x^{*}){\rm d}s\right)^{\top} P (x(t)-x^{*})-2{\rm e}^{2kt}\tau_{2} (x(t)-x^{*})^{\top}P(x(t)-x^{*})\\
	&\quad -2{\rm e}^{2kt}(x(t)-x^{*})^{\top}(A+2I)^{\top}DG(t)\\
	&+2{\rm e}^{2kt}G^{\top}(t)DG(t)+2{\rm e}^{2kt}(x(t-\tau_{1})-x^{*})^{\top}DG(t)\\
	&\quad +2{\rm e}^{2kt}\left(\int_{t-\tau_{2}}^{t}(x(s)-x^{*}){\rm d}s\right)^{\top} DG(t)-2{\rm e}^{2kt}\tau_{2} (x(t)-x^{*})^{\top}DG(t)\\
	&\quad  +  {\rm e}^{2kt} (x(t)- x^{*})^{\top} Q (x(t) - x^{*})- {\rm e}^{2k(t - \tau_{1})} (x(t - \tau_{1}) - x^{*})^{\top} Q (x(t - \tau_{1}) - x^{*})\\
	&\quad +\tau_{2} {\rm e}^{2kt}(x(t)-x^{*})^{\top}H(x(t)-x^{*})-{\rm e}^{2k(t-\tau_{2})}\frac{1}{\tau_{2}}\left(\int_{t-\tau_{2}}^{t}(x(s)-x^{*}){\rm d}s\right)^{\top}H\int_{t-\tau_{2}}^{t}(x(s)-x^{*}){\rm d}s.
	\end{align*}
According to Lemma \ref{pro}, we have
\begin{align*}
\frac{{\rm {d}}V(x(t))}{{\rm {d}}t} &\leq  2k{\rm e}^{2kt}(x(t)-x^{*})^{\top}P(x(t)-x^{*})+4k{\rm e}^{2kt}(x(t)-x^{*})^{\top}DG(t)\\
&\quad -2{\rm e}^{2kt}(x(t)-x^{*})^{\top}(A+2I)^{\top}P(x(t)-x^{*})\\
&\quad +2{\rm e}^{2kt}G^{\top}(t)P(x(t)-x^{*})+2{\rm e}^{2kt}(x(t-\tau_{1})-x^{*})^{\top}P(x(t)-x^{*})\\
&\quad +2{\rm e}^{2kt}\left(\int_{t-\tau_{2}}^{t}(x(s)-x^{*}){\rm d}s\right)^{\top} P (x(t)-x^{*})-2{\rm e}^{2kt}\tau_{2} (x(t)-x^{*})^{\top}P(x(t)-x^{*})\\
	&\quad -2{\rm e}^{2kt}(x(t)-x^{*})^{\top}A^{\top}DG(t)\\
	&\quad -2{\rm e}^{2kt}G^{\top}(t)DG(t)+2{\rm e}^{2kt}(x(t-\tau_{1})-x^{*})^{\top}DG(t)\\
	&\quad +2{\rm e}^{2kt}\left(\int_{t-\tau_{2}}^{t}(x(s)-x^{*}){\rm d}s\right)^{\top} DG(t)-2{\rm e}^{2kt}\tau_{2} (x(t)-x^{*})^{\top}DG(t)\\
	&\quad  +  {\rm e}^{2kt} (x(t)- x^{*})^{\top} Q (x(t) - x^{*})- {\rm e}^{2k(t - \tau_{1})} (x(t - \tau_{1}) - x^{*})^{\top} Q (x(t - \tau_{1}) - x^{*})\\
	&\quad +\tau_{2} {\rm e}^{2kt}(x(t)-x^{*})^{\top}H(x(t)-x^{*})-{\rm e}^{2k(t-\tau_{2})}\frac{1}{\tau_{2}}\left(\int_{t-\tau_{2}}^{t}(x(s)-x^{*}){\rm d}s\right)^{\top}H\int_{t-\tau_{2}}^{t}(x(s)-x^{*}){\rm d}s.
\end{align*}
Moreover, for any real matrices $T_{1}$ and $T_{2}$ with compatible dimensions, we have
\begin{align}\nonumber
0&=2{\rm e}^{2kt}\left[\dot{x}(t)^{\top}T_{1}+G^{\top}(t)T_{2}\right][-\dot{x}(t)-(A+2I)(x(t)-x^{*})\\\label{lebnizi2}
	&\quad +G(t)+(x(t-\tau_{1})-x^{*})+\int_{t - \tau_{2}}^{t}(x(s)-x^{*}){\rm d}s-\tau_{2}(x(t)-x^{*})].
\end{align}
	Adding the terms on the right hand side of \eqref{lebnizi2} into $\frac{{\rm{d}}V(x(t))}{{\rm{d}}t}$ yields
	\begin{equation*}
	\frac{{\rm {d}}V(x(t))}{{\rm {d}}t} \leq {\rm e}^{2kt} v(t)^{T} \Pi v(t),
	\end{equation*}
	where
	\begin{equation*}
	v^{T}(t)=\left[(x(t)-x^{*})^{\top}   \quad   (x(t-\tau_{1})-x^{*})^{\top}  \quad G^{\top}(t)  \quad  \dot{x}(t)^{\top}   \quad  \left(\int_{t-\tau_{2}}^{t}(x(s)-x^{*}){\rm d}s\right)^{\top}\right]
	\end{equation*}
and $\Pi$ is shown in \eqref{2LMI}. If $\Pi \prec 0$ and $x(t) \not\equiv x^{*}$, we have $\frac{{\rm d}V(x(t))}{{\rm d}t} <0$. Therefore, we have $V(x(t)) \leq V(x(0))$. Furthermore, we have
\begin{align*}
	V(x(0)) &=  (x(0) - x^{*})^{\top} P (x(0) - x^{*})+ 2  \sum\limits_{i=1}^{n} d_{i} \int_{x_{i}^{*}}^{x_{i}(0)} (P_{\Omega}[2z_{i}] - P_{\Omega}[2x_{i}^{*}]) {\rm d}z_{i} \\
	&\quad + \int_{ - \tau_{1}}^{0} {\rm e}^{2ks} (x(s) - x^{*})^{\top} Q (x(s) - x^{*}){\rm d}s+\int_{-\tau_{2}}^{0}\int_{\theta}^{0}{\rm e}^{2ks}(x(s) - x^{*})^{\top} H (x(s) - x^{*}){\rm d}s{\rm d}\theta
\end{align*}
Consequently, we have
	\begin{align*}
	V(x(0)) &\leq\lambda_{\max}(P)\|x(0)-x^{*}\|^{2}+2~\max(d_{i})\|x(0)-x^{*}\|^{2}\\
	&\quad +\lambda_{\max}(Q)\int_{-\tau_{1}}^{0}e^{2ks}\|x(s)-x^{*}\|^{2}{\rm d}s+\lambda_{\max}(H)\int_{-\tau_{2}}^{0}\int_{\theta}^{0}{\rm e}^{2ks}\|(x(s) - x^{*})\|^{2}{\rm d}s{\rm d}\theta\\
	&\leq [\lambda_{\max}(P)+\frac{1-{\rm e}^{-2k\tau_{1}}}{2k}\lambda_{\max}(Q)+2\max(d_{i})+\frac{2k\tau_{2}-1+{\rm e}^{-2k\tau_{2}}}{4k^{2}}\lambda_{\max}(H)]\|\phi(t)-x^{*}\|_{s}^{2}.
	\end{align*}
Meanwhile, it is easy to see that
	\begin{align*}
	V(x(t))
	\geq & {\rm e}^{2kt}\lambda_{\min}(P)\|x(t) - x^{*}\|^{2}.
	\end{align*}
Therefore, we obtain
	\begin{equation}\label{tau}
	\|x(t)-x^{*}\| \leq \gamma \|\phi(t)-x^{*}\|_{s} {\rm e}^{-kt},
	\end{equation}
where
\begin{equation*}
\gamma=\sqrt{\lambda_{\min}(P)^{-1}\left[\lambda_{\max}(P)+\frac{1-{\rm e}^{-2k\tau_{1}}}{2k}\lambda_{\max}(Q)+2\max(d_{i})+\frac{2k\tau_{2}-1+{\rm e}^{-2k\tau_{2}}}{4k^{2}}\lambda_{\max}(H)\right]}>0.
\end{equation*}
Then, by  Definition \ref{exponentially} and \eqref{tau}, the equilibrium point of the NNMMD~\eqref{eq:mtdnn} is exponentially stable if \eqref{2LMI} holds.
\end{proof}

\begin{rem}\label{rem:spec}
When $\tau_2 = 0$, the NNMMD~\eqref{eq:mtdnn} reduces to  the  neural network model with discrete delay (NNMDD)
\begin{equation}\label{eq:dtdnn}
\left\{
\begin{aligned}
 &\frac{{\rm {d}}x(t)}{{\rm {d}}t}=-(A+2I)x(t)+ P_{\Omega}[2x(t)] + x(t - \tau_1) + b,\\
 &x(t) =\phi(t), ~ t \in [- \tau_1, 0],
\end{aligned}
\right.
\end{equation}
where $\tau_1 \geq 0$ denotes the transmission delay and $\phi(t) \in \mathcal{C}_{\tau_1}$.
\end{rem}

For the NNMDD~\eqref{eq:dtdnn}, we have the following results.

\begin{cor}
For each $\phi\in \mathcal{C}_{\tau_1}$, there exists a unique continuous solution to the  NNMDD~\eqref{eq:dtdnn} in the global time interval $[0, +\infty)$.
\end{cor}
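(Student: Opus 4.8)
The plan is to obtain this as an immediate specialization of Lemma~\ref{solution}. By Remark~\ref{rem:spec}, setting $\tau_2 = 0$ in the NNMMD~\eqref{eq:mtdnn} yields exactly the NNMDD~\eqref{eq:dtdnn}: the coefficient $(2+\tau_2)I$ collapses to $2I$ and the distributed-delay term $\int_{t-\tau_2}^t x(s)\,{\rm d}s$ vanishes identically, while $\tau_M = \max\{\tau_1,0\} = \tau_1$. Hence it suffices to check that the proof of Lemma~\ref{solution} remains valid in the degenerate case $\tau_2 = 0$, and then to invoke it with an arbitrary initial function $\phi \in \mathcal{C}_{\tau_1}$.

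Concretely, I would re-read the three ingredients used in the proof of Lemma~\ref{solution} with $\tau_2 = 0$ in mind. First, the linear growth bound becomes $\|g(t,\psi_t)\| \le \eta(\|\psi_t\|_s)$ with $\eta(r) = (\|A\| + 5)r + \|b\|$, which is still continuous and nondecreasing and still satisfies $\int_{r_0}^{\infty} {\rm d}r/\eta(r) = \infty$; this rules out finite-time blow-up. Second, the Lipschitz estimate with respect to the second argument goes through with constant $\|A\| + 5$, using only the triangle inequality and the nonexpansiveness of $P_\Omega$ (Lemma~\ref{ty}(i)), and in particular it forces continuity in the state variable. Third, the continuity of $g$ in $t$ is argued exactly as before, the only change being that the integral term is simply absent, so the uniform-continuity argument applied to $x_{t_2}$ is, if anything, shorter. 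With these three facts in hand, the local existence-uniqueness theorem together with the continuation/global-existence theorem of \cite{Kharitonov2012}, as cited in the proof of Lemma~\ref{solution}, deliver a unique continuous solution of the NNMDD~\eqref{eq:dtdnn} on all of $[0,+\infty)$.

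I do not expect any genuine obstacle here: the entire analytic content was already established in Lemma~\ref{solution}, and the only point to confirm is that no step of that proof secretly used $\tau_2 > 0$ — it does not, since there is no division by $\tau_2$, in contrast to the LMI data of Theorem~\ref{stable:mtdnn}. If a fully self-contained write-up were preferred, one could instead repeat the three bounds above verbatim with $\tau_2 = 0$; but given Lemma~\ref{solution}, the one-line reduction is the cleanest route.
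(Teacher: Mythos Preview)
Your proposal is correct and matches the paper's approach: the paper states this corollary without proof, relying implicitly on exactly the specialization $\tau_2 = 0$ of Lemma~\ref{solution} that you spell out via Remark~\ref{rem:spec}. Your additional observation that the proof of Lemma~\ref{solution} never divides by $\tau_2$ (so the degenerate case is harmless) is a welcome sanity check that the paper leaves to the reader.
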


\begin{cor}\label{the3}
If there exist two symmetric positve-definite matrices $\mathcal{P} \in \mathbb{R}^{n \times n}$ and $\mathcal{Q}\in \mathbb{R}^{n \times n}$, a positive-definite diagonal matrix $\mathcal{D} \in \mathbb{R}^{n \times n}$ and a constant $k > 0$ such that
		\begin{equation}\label{1LMI}
	\mathcal{R}=
	\left[
	\begin{array}{ccc}
	2k\mathcal{P}-A^{\top}\mathcal{P}-\mathcal{P}A-4\mathcal{P}+\mathcal{Q} & \mathcal{P}-\mathcal{D}A^{\top}+2k\mathcal{D} & \mathcal{P} \\
	\mathcal{P}-A\mathcal{D}+2k\mathcal{D} & -2\mathcal{D} & \mathcal{D} \\
	\mathcal{P} & \mathcal{D} & -{\rm e}^{-2k\tau_1}\mathcal{Q}
	\end{array}
	\right] \prec 0,
	\end{equation}
then the equilibrium point of the NNMDD~\eqref{eq:dtdnn} is exponentially stable.
\end{cor}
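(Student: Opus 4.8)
The plan is to run the Lyapunov--Krasovskii argument of Theorem~\ref{stable:mtdnn} in the degenerate case $\tau_2=0$. One cannot simply set $\tau_2=0$ in the LMI~\eqref{2LMI}, since the block $\Pi_{55}=-\tfrac{{\rm e}^{-2k\tau_2}}{\tau_2}H$ is singular there; instead I would redo the construction after discarding everything that the distributed delay contributed. Concretely, the integral $\int_{t-\tau_2}^{t}x(s)\,{\rm d}s$ drops out of the dynamics, the double-integral term carrying $H$ drops out of the Lyapunov functional, and --- since there is no longer an integral of $\dot x$ to reconcile --- the free-weighting matrices $T_1,T_2$ and the $\dot x(t)$-component of the augmented vector are no longer needed. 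This is precisely why the $5\times5$ inequality~\eqref{2LMI} shrinks to the $3\times3$ inequality~\eqref{1LMI}.

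First I would fix an equilibrium point $x^*$ of~\eqref{eq:dtdnn}, so that $-(A+2I)x^*+P_\Omega[2x^*]+x^*+b=0$; substituting this into~\eqref{eq:dtdnn} gives, along any solution,
\[
\dot x(t)=-(A+2I)\bigl(x(t)-x^*\bigr)+G(t)+\bigl(x(t-\tau_1)-x^*\bigr),\qquad G(t):=P_\Omega[2x(t)]-P_\Omega[2x^*].
\]
Then I would take the Lyapunov--Krasovskii functional
\[
\mathcal{V}(x(t))={\rm e}^{2kt}(x(t)-x^*)^\top\mathcal{P}(x(t)-x^*)+2{\rm e}^{2kt}\sum_{i=1}^{n}(\mathcal{D})_{ii}\int_{x_i^*}^{x_i(t)}\bigl(P_\Omega[2z_i]-P_\Omega[2x_i^*]\bigr)\,{\rm d}z_i+\int_{t-\tau_1}^{t}{\rm e}^{2ks}(x(s)-x^*)^\top\mathcal{Q}(x(s)-x^*)\,{\rm d}s,
\]
that is, the functional of Theorem~\ref{stable:mtdnn} with $(P,Q,D)$ replaced by $(\mathcal{P},\mathcal{Q},\mathcal{D})$ and the $H$-term deleted. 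Exactly as in the theorem, inequality~\eqref{ineq} together with the positive-definiteness of $\mathcal{P},\mathcal{Q},\mathcal{D}$ gives $\mathcal{V}(x(t))\ge0$, with equality iff $x(t)\equiv x^*$, and the lower bound $\mathcal{V}(x(t))\ge{\rm e}^{2kt}\lambda_{\min}(\mathcal{P})\|x(t)-x^*\|^2$.

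Next I would differentiate $\mathcal{V}$ along a solution, substitute the expression above for $\dot x(t)$ into the $2{\rm e}^{2kt}\dot x(t)^\top\bigl[\mathcal{P}(x(t)-x^*)+\mathcal{D}G(t)\bigr]$ terms, use~\eqref{ineq} to bound the residual projection-integral term by $4k{\rm e}^{2kt}(x(t)-x^*)^\top\mathcal{D}G(t)$, and apply Lemma~\ref{pro} (with the pair $2x(t),2x^*$) to the purely quadratic $G$-terms. Collecting the result yields
\[
\frac{{\rm d}\mathcal{V}(x(t))}{{\rm d}t}\le{\rm e}^{2kt}\,\nu(t)^\top\mathcal{R}\,\nu(t),\qquad \nu(t)=\Bigl[\,(x(t)-x^*)^\top\ \ G(t)^\top\ \ (x(t-\tau_1)-x^*)^\top\,\Bigr]^\top,
\]
with $\mathcal{R}$ the matrix in~\eqref{1LMI}; the verification reduces to matching the six blocks of $\mathcal{R}$ against the coefficients produced by the $2k\mathcal{P}$-term, the $\dot x$-substitution and the $\mathcal{Q}$-difference, after the Lemma~\ref{pro} reduction. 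Hence $\mathcal{R}\prec0$ forces $\tfrac{{\rm d}\mathcal{V}(x(t))}{{\rm d}t}<0$ whenever $x(t)\not\equiv x^*$, so $\mathcal{V}(x(t))\le\mathcal{V}(x(0))$ for all $t\ge0$.

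Finally I would bound $\mathcal{V}(x(0))$ from above --- the first term by $\lambda_{\max}(\mathcal{P})\|x(0)-x^*\|^2$, the second by $2\max_i(\mathcal{D})_{ii}\,\|x(0)-x^*\|^2$ via~\eqref{ineq}, and the third by $\tfrac{1-{\rm e}^{-2k\tau_1}}{2k}\lambda_{\max}(\mathcal{Q})\,\|\phi(t)-x^*\|_s^2$ --- to obtain $\mathcal{V}(x(0))\le\bar\gamma^2\|\phi(t)-x^*\|_s^2$ with $\bar\gamma^2=\lambda_{\max}(\mathcal{P})+2\max_i(\mathcal{D})_{ii}+\tfrac{1-{\rm e}^{-2k\tau_1}}{2k}\lambda_{\max}(\mathcal{Q})$. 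Combining this with the lower bound from the previous step gives $\|x(t)-x^*\|\le\sqrt{\bar\gamma^2/\lambda_{\min}(\mathcal{P})}\,\|\phi(t)-x^*\|_s\,{\rm e}^{-kt}$, which is the estimate required by Definition~\ref{exponentially}. The only step needing genuine work is the middle one: assembling the cross-terms into the quadratic form $\nu^\top\mathcal{R}\nu$ and reading off that the blocks indeed coincide with those of~\eqref{1LMI}; the remaining steps are a shortened transcription of the proof of Theorem~\ref{stable:mtdnn}.
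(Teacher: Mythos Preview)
Your proposal is correct and follows exactly the approach the paper intends: the paper's own proof consists of stating the same Lyapunov--Krasovskii functional you wrote down (the functional of Theorem~\ref{stable:mtdnn} with the $H$-double-integral removed and $(\mathcal{P},\mathcal{Q},\mathcal{D})$ in place of $(P,Q,D)$) and then deferring all details to Theorem~\ref{stable:mtdnn}. Your observation that the free-weighting identity~\eqref{lebnizi2} and the $\dot x(t)$/$\int$-components of $v(t)$ can be dropped once the distributed delay is gone is precisely what collapses the $5\times 5$ LMI to the $3\times 3$ LMI~\eqref{1LMI}, and your ordering $\nu(t)=[(x(t)-x^*)^\top,\,G(t)^\top,\,(x(t-\tau_1)-x^*)^\top]^\top$ matches the block structure of~$\mathcal{R}$.
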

\begin{proof}
The proof is similar to that of Theorem \ref{stable:mtdnn} after defining the following Lyapunov-Krasovskii functional
\begin{align*}
V(x(t)) =& {\rm e}^{2kt} (x(t) - x^{*})^{\top} \mathcal{P} (x(t) - x^{*})+ 2{\rm e}^{2kt} \sum\limits_{i=1}^{n} d_{i} \int_{x_{i}^{*}}^{x_{i}(t)} (P_{\Omega}[2z_{i}] - P_{\Omega}[2x_{i}^{*}]) {\rm d}z_{i} \\\nonumber
	&\quad+ \int_{t - \tau_1}^{t} {\rm e}^{2ks} (x(s) - x^{*})^{\top} \mathcal{Q} (x(s) - x^{*}){\rm d}s.
\end{align*}
Hence, we omit the detail to save space.
\end{proof}

\section{Simulation results}\label{sec:exam}
In this section, three simulation examples are given to illustrate the effectiveness of the proposed models. All experiments are implemented in MATLAB R2021b with a machine precision $2.22\times 10^{-16}$ on a PC Windows $10$ operating system with an Intel i7-8750 CPU and 8 GB RAM. We use the solver ``DDESD'' to solve delay differential equations \cite{S2005} and ``ODE23'' is used for solving ordinary differential equations.

\begin{example}\label{ex:example1}
Consider the AVE \eqref{eq:ave} with
\begin{equation*}
A=\left[\begin{array}{ccc}
4 & 0 & 0 \\
2 & 2  & 8 \\
0  &0 & 4
\end{array}
\right] \quad \text{and}\quad
b=\left[
\begin{array}{c}
	0\\
	1\\
    0
\end{array}
\right].
\end{equation*}
It is clear that this AVE has a unique solution $x^{*}=[0,1,0]^{\top}$ while $\|A^{-1}\|\approx 1.1677 > 1$.

For the NNMMD~\eqref{eq:mtdnn}, we first let $\tau_{1}=1$ and $\tau_{2}=0.5$. For $k=0.01$, by using the appropriate linear matrix inequality (LMI) solver \cite{pg1994} to get the feasible numerical solution of the LMI \eqref{2LMI}, we have
\begin{equation*}
\begin{array}{ll}
P=\left[\begin{array}{ccc}
	4.9653 & -0.2704& 1.0868\\
-0.2704 &  2.0143 &-1.0943\\
 1.0868  & -1.0943 &  9.6033
\end{array}
\right]\succ 0,& Q=\left[\begin{array}{ccc}
	15.5985 & 0.1188  & 0.1670\\
	0.1188& 7.2381 & 0.3106\\
 0.1670 &  0.3106 & 16.5041
\end{array}
\right]\succ 0,\\
H=\left[\begin{array}{ccc}
	9.2818 & -0.0961 & 0.2513\\
	-0.0961  & 6.8007 & -0.4378\\
0.2513 & -0.4378 &  10.3333
\end{array}
\right]\succ 0,&
D=\left[\begin{array}{ccc}
		 2.7522   & 0 & 0\\
		0 & 0.8604     &0\\
	0& 0 & 3.7904
	\end{array}
\right]\succ 0,\\
T_{1}=\left[\begin{array}{ccc}
0.7772  & -0.0664 & 0.1755\\
-0.0508 & 0.3602 & -0.2314\\
0.1481 & -0.2432 &  1.4252
\end{array}
\right],&
T_{2}=\left[\begin{array}{ccc}
-1.3824  & -0.0044 & 0.0846\\
-0.1382 &  -0.3564 & -0.5537\\
0.1792   & -0.0928  & -1.2108
\end{array}
\right].
\end{array}
\end{equation*}
By Theorem~\ref{stable:mtdnn}, the equilibrium point of the NNMMD~\eqref{eq:mtdnn} is exponentially stable. For the NNMDD~\eqref{eq:dtdnn}, we first let $\tau_{1}=0.01$. Then for $k=0.01$, the feasible numerical solution of the LMI~\eqref{1LMI} could be as
\begin{align*}
P&=
\left[
\begin{array}{ccc}
0.1862  & -0.0187  &  0.0484\\
-0.0187  &  0.1924  & -0.1076\\
0.0484 &  -0.1076  &  0.3300
\end{array}
\right]\succ 0,\quad Q=
\left[
\begin{array}{ccc}
0.8601 &   0.0039  & -0.0278\\
0.0039 &   0.6984  &  0.0540\\
-0.0278  &  0.0540  &  0.7507
\end{array}
\right]\succ 0,\\
D&=
\left[
\begin{array}{ccc}
0.1199   &      0  &       0\\
0   & 0.3348   &      0\\\nonumber
0      &   0  &  0.0582
\end{array}
\right]\succ 0.
\end{align*}
By Corollary~\ref{the3}, the equilibrium point of the NNMDD~\eqref{eq:dtdnn} is exponentially stable. Figures \ref{F1}-\ref{F2} show the corresponding transient behaviors of the NNMMD~\eqref{eq:mtdnn} and the NNMDD~\eqref{eq:dtdnn} with the initial function $[2\cos(t),2\cos(t),-5\cos(t)]^{\top}$, from which we can find that they converge to the equilibrium point $x^{*}=[0,1,0]^{\top}$, the unique solution of the AVE~\eqref{eq:ave} in this example. Since $\|A^{-1}\|>1$, the existing neural network models proposed in \cite{cyyh2021,jyfc2023} are not available in theory. However, numerically, Figures \ref{FIX}-\ref{CYYH} illustrate the transient behaviors of CYYH \cite{cyyh2021} with $\gamma =2$, FIX \cite{jyfc2023} with $\xi_{1}=0.5, \xi_{2}=1.5, \tau_{1}=2, \tau_{2}=2$, which show that they numerically converge to the unique solution of the AVE~\eqref{eq:ave}.

\begin{figure}[htbp]
	{
		\begin{minipage}{.5\linewidth}
			
			\includegraphics[scale=0.5]{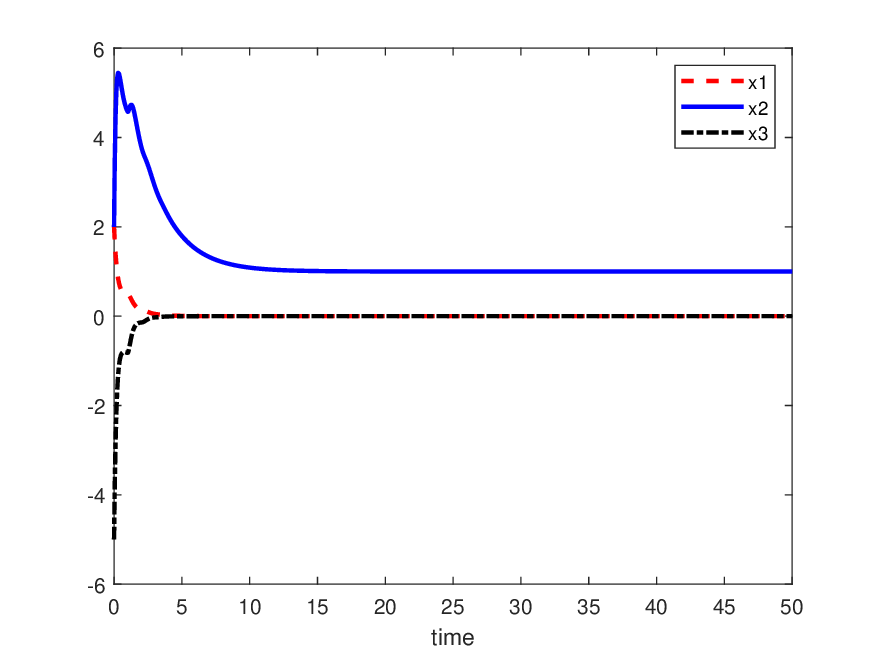}
			\caption{The transient behaviors of NNMMD~\eqref{eq:mtdnn}.}\label{F1}
		\end{minipage}
	}
\quad
	{
		\begin{minipage}{.5\linewidth}
			\centering
			\includegraphics[scale=0.5]{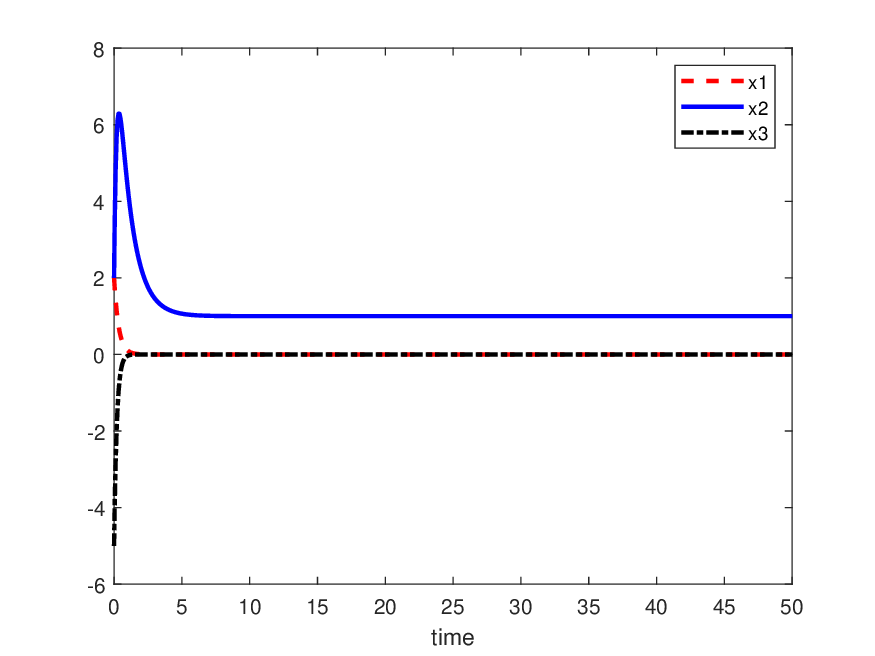}
			\caption{The transient behaviors of NNMDD~\eqref{eq:dtdnn}.}\label{F2}
		\end{minipage}
	}
	\quad
	{
		\begin{minipage}{.5\linewidth}
			\centering
			\includegraphics[scale=0.5]{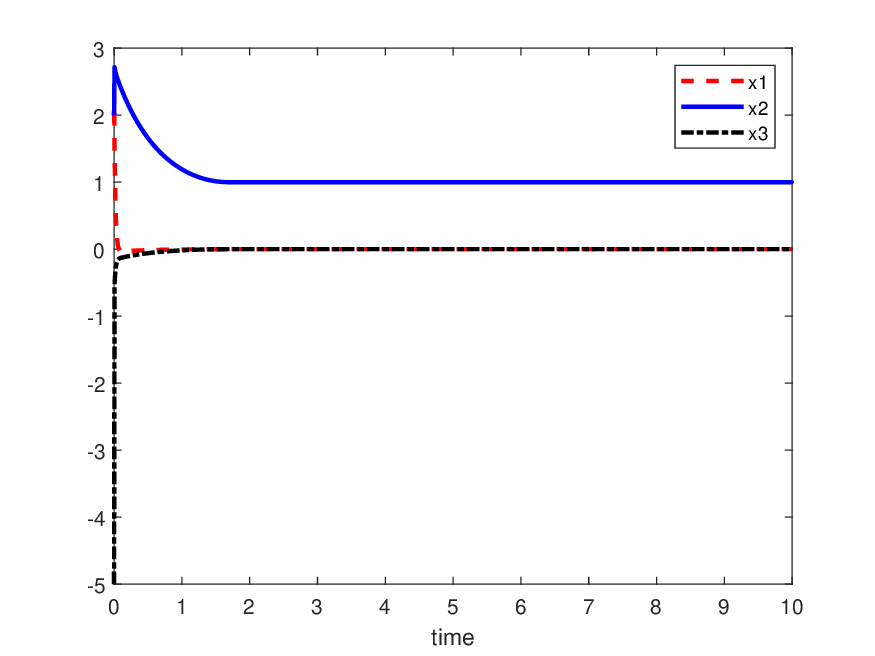}
			\caption{The transient behaviors of FIX. The initial vector is $[2, 2, -5]^{\top}$.}\label{FIX}
		\end{minipage}
	}
\quad
{
	\begin{minipage}{.5\linewidth}
		\centering
		\includegraphics[scale=0.5]{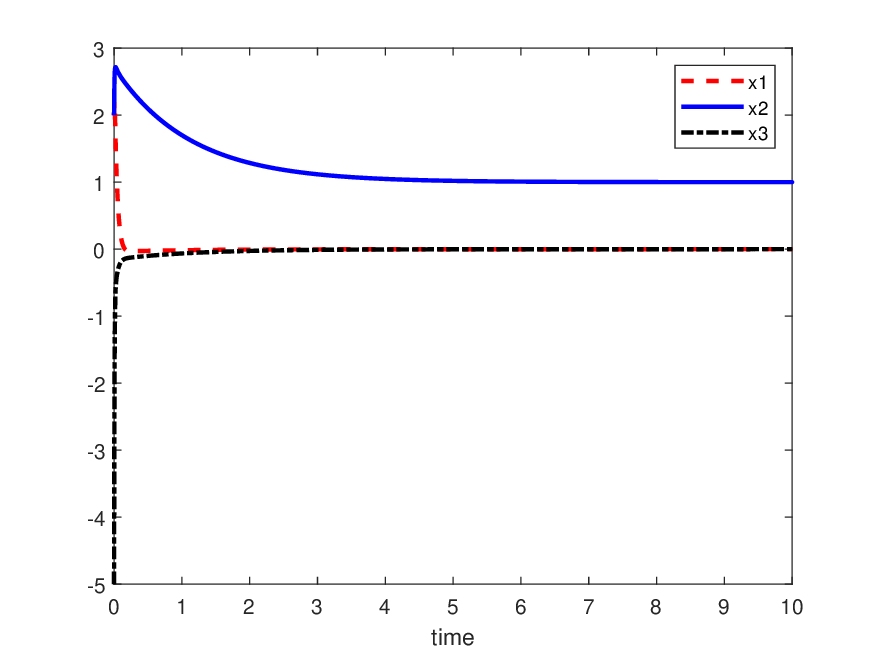}
		\caption{The transient behaviors of CYYH. The initial vector is $[2, 2, -5]^{\top}$}\label{CYYH}
	\end{minipage}
}
\end{figure}
\end{example}

In the following, we will give a example that CYYH and FIX are not available both in theory and in numeric while  NNMMD~\eqref{eq:mtdnn} and NNMDD~\eqref{eq:dtdnn} are still guaranteed.

\begin{example}\label{ex:example2}
Consider the AVE \eqref{eq:ave} with
\begin{equation*}
	A=
	\left[
	\begin{array}{ccc}
	2 & 0 & 0 \\
	0 & 0.9  & 0 \\
	0  &0 & 2
	\end{array}
	\right] ,
	~	b=
	\left[
	\begin{array}{c}
	0\\
	0\\
	-1
	\end{array}
	\right].
	\end{equation*}
It is clear that this AVE has the unique solution $x^{*}=[0,0,-0.33]^{\top}$ and $\|A^{-1}\|\approx 1.1111 > 1$. For the NNMMD~\eqref{eq:mtdnn}, we set $\tau_{1}=1$ and  $\tau_{2}=0.5$. For $k=0.01$, the feasible numerical solution of the LMI \eqref{2LMI} can be
\begin{equation*}
	\begin{array}{ll}
		P=\left[\begin{array}{ccc}
			0.3272 & 0& 0\\
			0& 0.4303  &0\\
			0  & 0 & 0.3272
		\end{array}
		\right]\succ 0,& Q=\left[\begin{array}{ccc}
		1.1880   & 0 & 0\\
			0& 1.1660  & 0\\
			0 &  0 & 1.1880
		\end{array}
		\right]\succ 0,\\
		H=\left[\begin{array}{ccc}
			0.7349& 0 & 0\\
			0  & 0.7511  & 0\\
			0 & 0 & 0.7349
		\end{array}
		\right]\succ 0,&
		D=\left[\begin{array}{ccc}
			0.4773   & 0 & 0\\
			0 & 0.5482    &0\\
			0& 0 & 0.4773
		\end{array}
		\right]\succ 0,\\
		T_{1}=\left[\begin{array}{ccc}
			0.0483 & 0 & 0\\
			0&  0.0845  & 0\\
			0 & 0 & 0.0483
		\end{array}
		\right],&
		T_{2}=\left[\begin{array}{ccc}
			-0.1706 & 0 & 0\\
			0& -0.0808  & 0\\
			0  & 0 & -0.1706
		\end{array}
		\right].
	\end{array}
\end{equation*}
For the NNMDD~\eqref{eq:dtdnn}, let $\tau_{1}=0.01$. Then for $k=0.01$, the feasible numerical solution of the LMI \eqref{1LMI} is
\begin{align*}
P&=
\left[
\begin{array}{ccc}
4.6750  & 0 &   0\\
0  & 6.0414  &  0\\
0  &  0 &  4.6750
\end{array}
\right]\succ 0,\quad Q=
\left[
\begin{array}{ccc}
18.2485  & 0 &  0\\
0  &  17.4525   & 0\\
0 & 0  &  18.2485
\end{array}
\right]\succ 0,\\
D&=
\left[
\begin{array}{ccc}
 4.7718     &    0    &     0\\
0   & 7.3621      &    0\\\nonumber
0     &    0  &  4.7718
\end{array}
\right]\succ 0.
\end{align*}
Figure \ref{F7} shows the transient behaviors of $x(t)$ in NNMMD~\eqref{eq:mtdnn} and  NNMDD~\eqref{eq:dtdnn} with initial function $[2\cos(t),-2\cos(t),-5\cos(t)]^\top$, and the  transient behaviors of CYYH ($\gamma =2$) and FIX ($\xi_{1}=0.5, \xi_{2}=1.5, \tau_{1}=2$ and  $\tau_{2}=2$) with initial vector $[1,1,2]^{\top}$. It follows from \ref{F7} that NNMMD~\eqref{eq:mtdnn} and  NNMDD~\eqref{eq:dtdnn} converge to the unique solution of the AVE while CYYH and FIX fail.

\begin{figure}
	\subfigure{
		\begin{minipage}{.5\linewidth}
			\centering
			\includegraphics[scale=0.5]{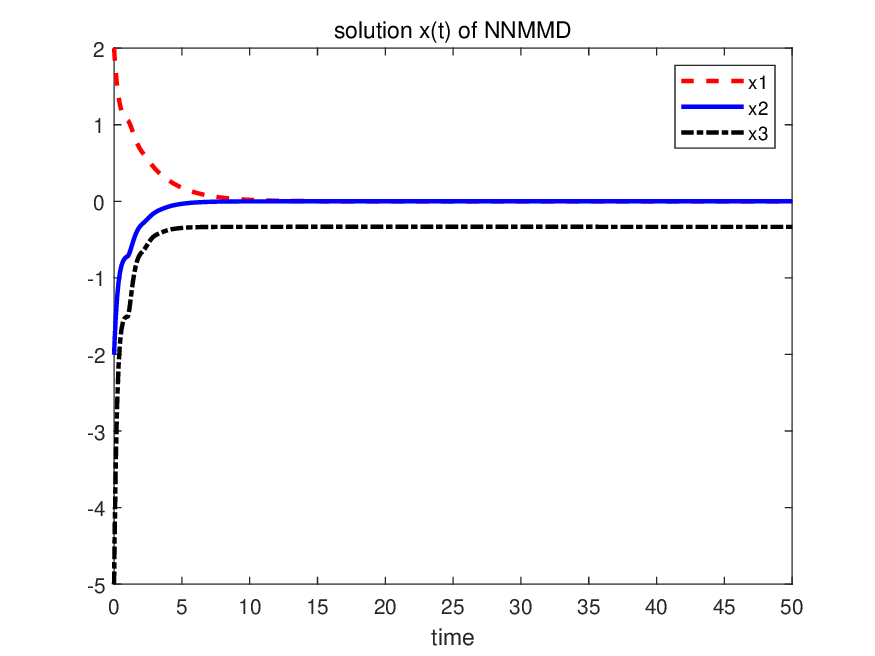}
		\end{minipage}
	}
	\quad
	\subfigure	{
		\begin{minipage}{.5\linewidth}
			\centering
			\includegraphics[scale=0.5]{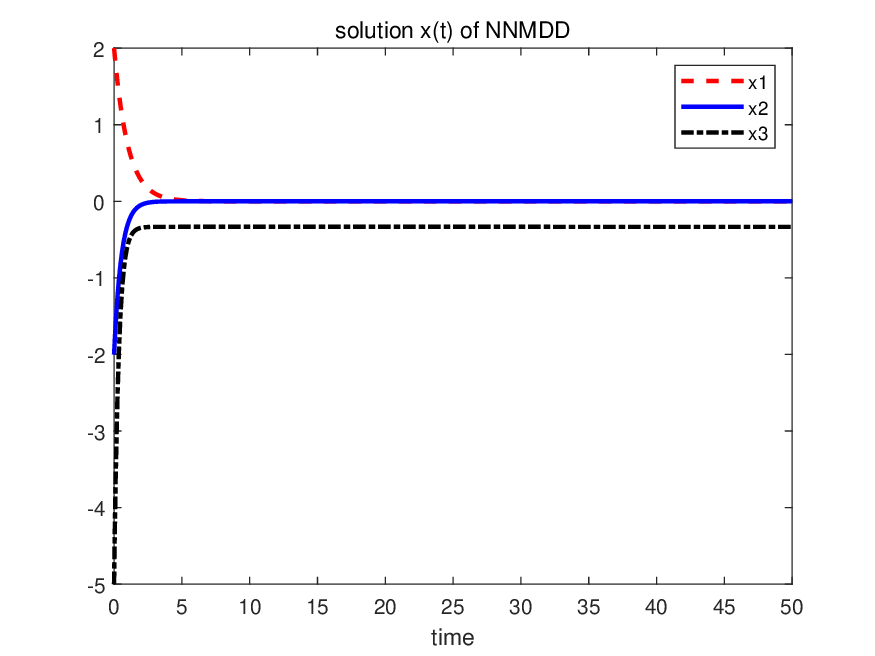}
		\end{minipage}
	}
	\quad
	\subfigure	{
		\begin{minipage}{.5\linewidth}
			\centering
			\includegraphics[scale=0.5]{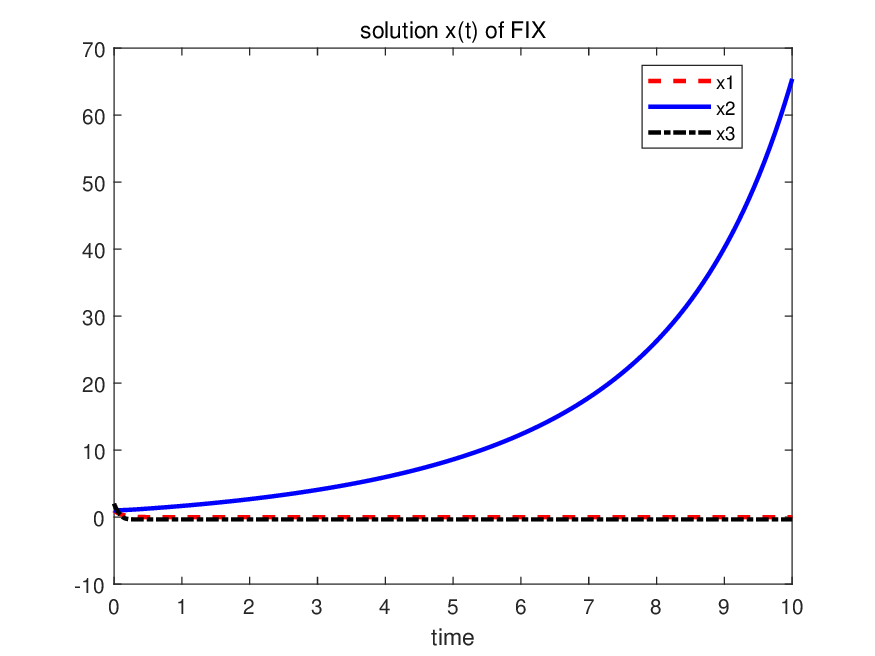}
			
		\end{minipage}
	}
	\quad
	\subfigure{
		\begin{minipage}{.5\linewidth}
			\centering
			\includegraphics[scale=0.5]{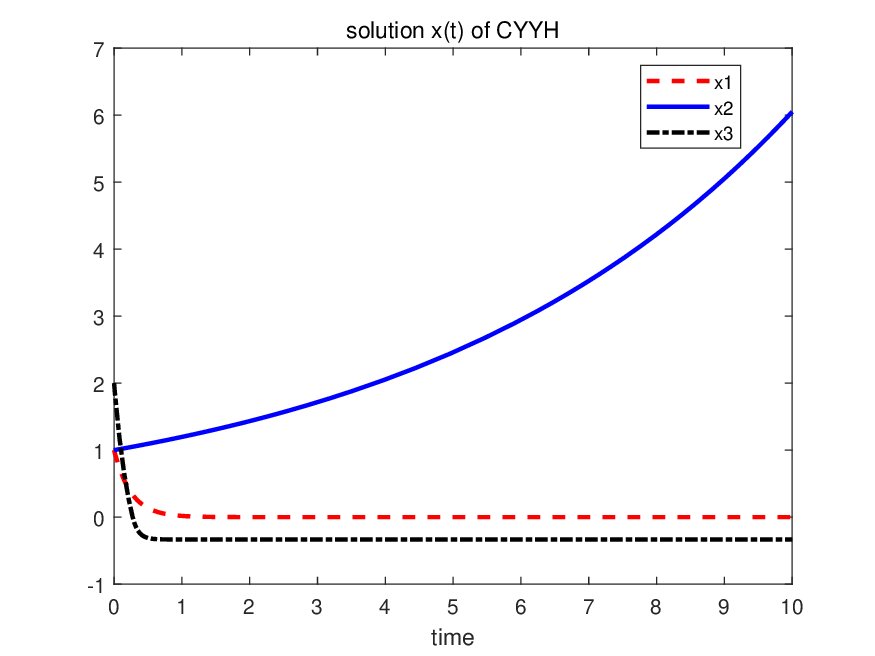}
			
		\end{minipage}
	}
	\caption{The transient behaviors of NNMMD~\eqref{eq:mtdnn}, NNMDD~\eqref{eq:dtdnn}, FIX, CYYH.}\label{F7}
\end{figure}
\end{example}

The following example shows that NNMMD~\eqref{eq:mtdnn} and NNMDD~\eqref{eq:dtdnn} are also exponentially stable when $1$ is an eigenvalue of $A$. As mentioned in \cite{cyyh2021}, the methods proposed in \cite{maee2017,maer2018} are not available in this case. In addition, in the case that the AVE \eqref{eq:ave} has multiple solutions, under the same initial condition,  NNMMD~\eqref{eq:mtdnn} and NNMDD~\eqref{eq:dtdnn} can get different solutions of the AVE \eqref{eq:ave} by changing the delay parameters.

\begin{example}\label{ex:example3}
Considering the AVE \eqref{eq:ave} with
	\begin{equation*}
	A=
	\left[
	\begin{array}{ccc}
	1 & 0 & 0 \\
	0 & 1  & 0 \\
	0  &0 & 1
	\end{array}
	\right] ,
~	b=
	\left[
	\begin{array}{c}
	0\\
	0\\
	-1
	\end{array}
	\right].
	\end{equation*}
Obviously, the AVE has infinitely many solutions and thus both NNMMD~\eqref{eq:mtdnn} and NNMDD~\eqref{eq:dtdnn} have infinitely many equilibrium points. As shown in Figures \ref{F8}-\ref{F9}, both NNMMD~\eqref{eq:mtdnn} and NNMDD~\eqref{eq:dtdnn} can obtain different solutions of AVE \eqref{eq:ave} with different delays while the initial function $[\cos(t),2\cos(t),-2\cos(t)]^{\top}$ is frozen. Theoretically, the corresponding LMI is solvable. For example, for the NNMMD~\eqref{eq:mtdnn}, let $\tau_{1}=5$ and $\tau_{2}=0.5$, then for  $k=0.01$,  the LMI \eqref{2LMI} has the solution
\begin{equation*}
	\begin{array}{ll}
		P=\left[\begin{array}{ccc}
		0.4174 & 0& 0\\
			0& 0.4174  &0\\
			0  & 0 & 0.4174
		\end{array}
		\right]\succ 0,& Q=\left[\begin{array}{ccc}
			1.3234    & 0 & 0\\
			0& 1.3234   & 0\\
			0 &  0 & 1.3234
		\end{array}
		\right]\succ 0,\\
		H=\left[\begin{array}{ccc}
		0.7985& 0 & 0\\
			0  & 0.7985  & 0\\
			0 & 0 & 0.7985
		\end{array}
		\right]\succ 0,&
		D=\left[\begin{array}{ccc}
		0.5021   & 0 & 0\\
			0 & 0.5021    &0\\
			0& 0 & 0.5021
		\end{array}
		\right]\succ 0,\\
		T_{1}=\left[\begin{array}{ccc}
			0.0698 & 0 & 0\\
			0& 0.0698   & 0\\
			0 & 0 & 0.0698
		\end{array}
		\right],&
		T_{2}=\left[\begin{array}{ccc}
			-0.1400   & 0 & 0\\
			0& -0.1400    & 0\\
			0  & 0 & -0.1400
		\end{array}
		\right].
	\end{array}
\end{equation*}
For the NNMDD~\eqref{eq:dtdnn}, let $\tau_{1}=0.01$, then for  $k=0.01$,  the LMI \eqref{1LMI} has the solution
\begin{align*}
P&=
\left[
\begin{array}{ccc}
6.4771  & 0 &   0\\
0  & 6.4771  &  0\\
0  &  0 &  6.4771
\end{array}
\right]\succ 0,
\quad Q=
\left[
\begin{array}{ccc}
19.2798  & 0 &  0\\
0  &  19.2798  & 0\\
0 & 0  &  19.2798
\end{array}
\right]\succ 0,\\
D&=
\left[
\begin{array}{ccc}
7.8471     &    0    &     0\\
0   & 7.8471     &    0\\\nonumber
0     &    0  &  7.8471
\end{array}
\right].
\end{align*}

\begin{figure}
	\subfigure{
		\begin{minipage}{.5\linewidth}
			\centering
			\includegraphics[scale=0.5]{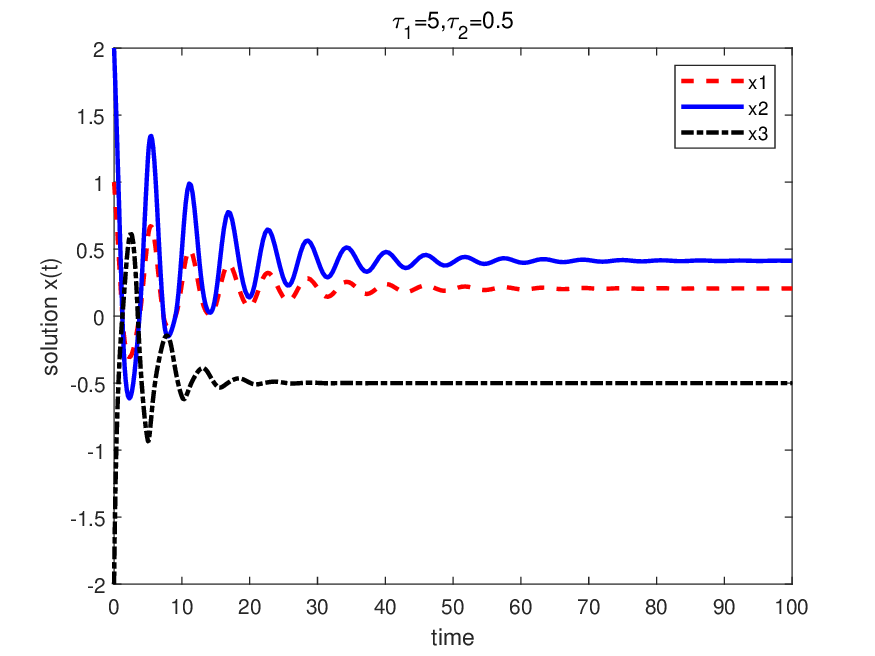}
		\end{minipage}
	}
\quad
\subfigure	{
	\begin{minipage}{.5\linewidth}
		\centering
		\includegraphics[scale=0.5]{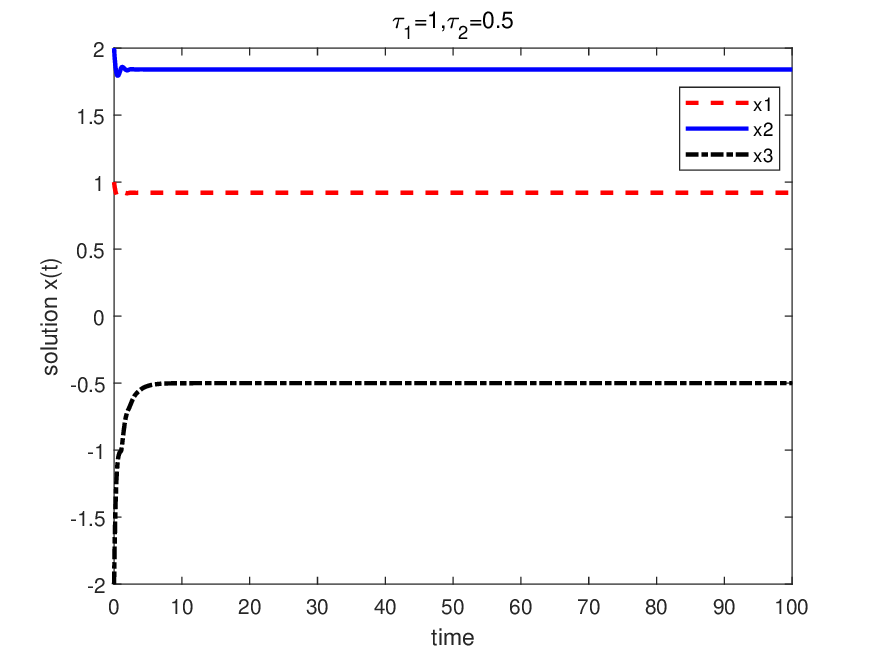}
	\end{minipage}
}
\quad
\subfigure	{
	\begin{minipage}{.5\linewidth}
		\centering
		\includegraphics[scale=0.5]{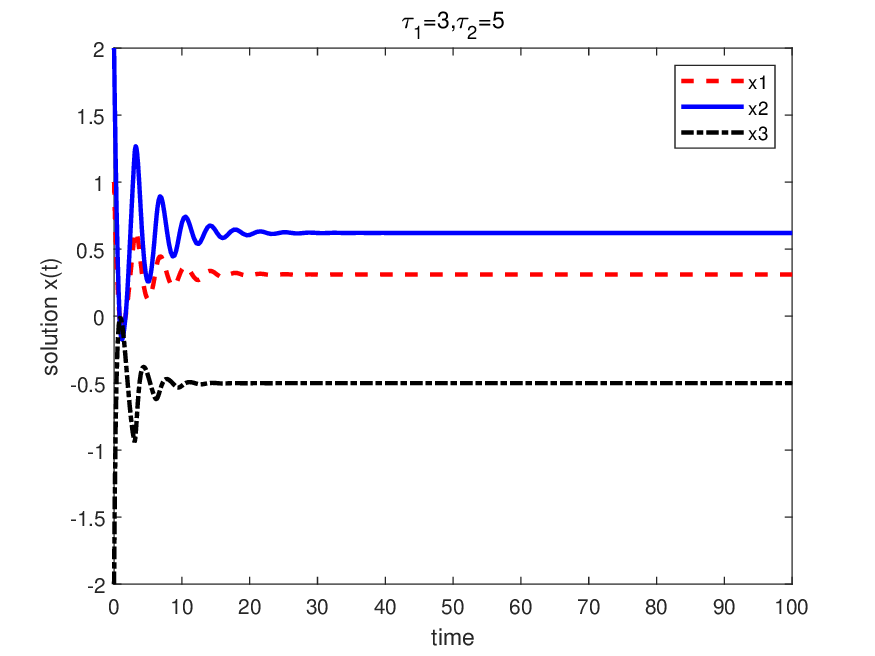}
	\end{minipage}
}
\quad
	\subfigure{
	\begin{minipage}{.5\linewidth}
		\centering
		\includegraphics[scale=0.5]{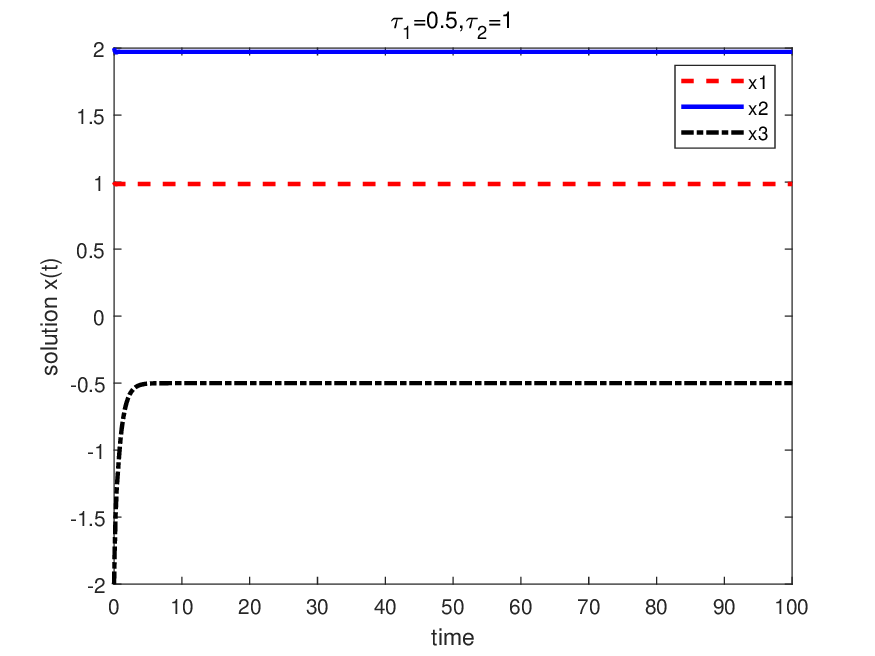}
	\end{minipage}
}
\caption{The transient behaviors of the NNMMD~\eqref{eq:mtdnn} with different values of $\tau_{1}$ and $ \tau_{2}$. }\label{F8}
\end{figure}

\begin{figure}
\subfigure	{
	\begin{minipage}{.5\linewidth}
		\centering
		\includegraphics[scale=0.5]{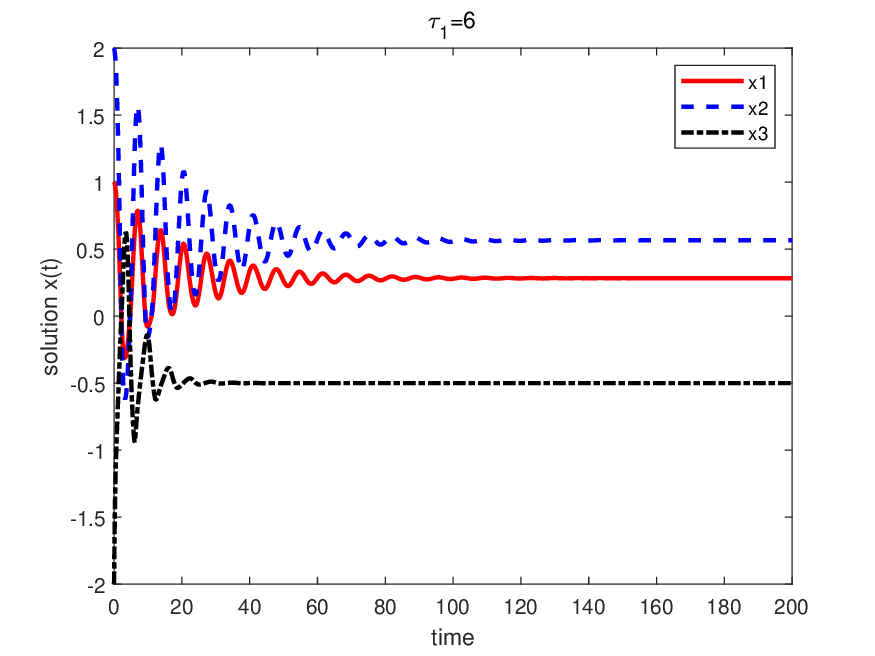}
		
	\end{minipage}
}
\quad
	\subfigure	{
		\begin{minipage}{.5\linewidth}
			\centering
			\includegraphics[scale=0.5]{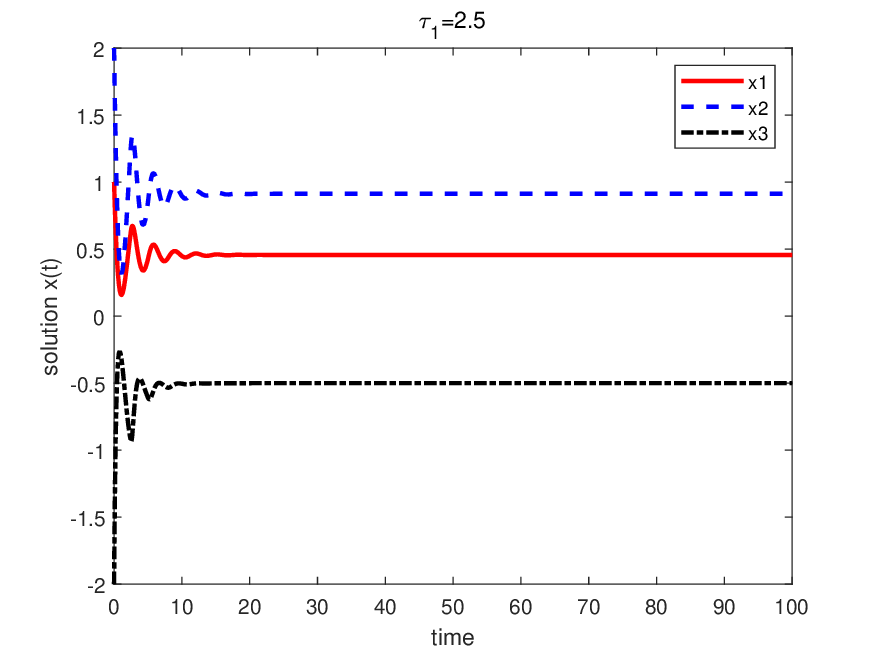}
			
		\end{minipage}
	}
\quad
	\subfigure{
		\begin{minipage}{.5\linewidth}
			\centering
			\includegraphics[scale=0.5]{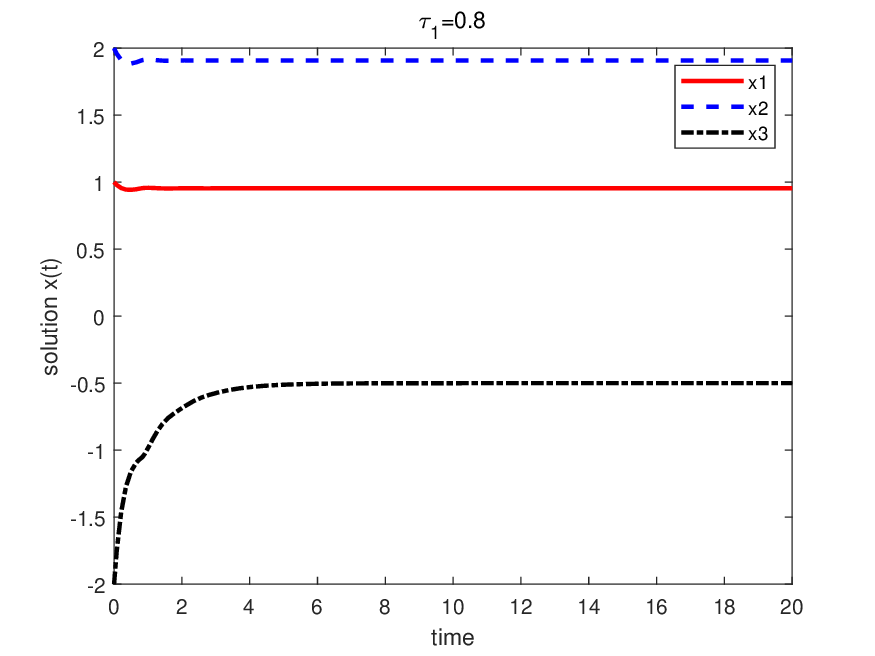}
			
		\end{minipage}
	}
\quad
\subfigure{
	\begin{minipage}{.5\linewidth}
		\centering
		\includegraphics[scale=0.5]{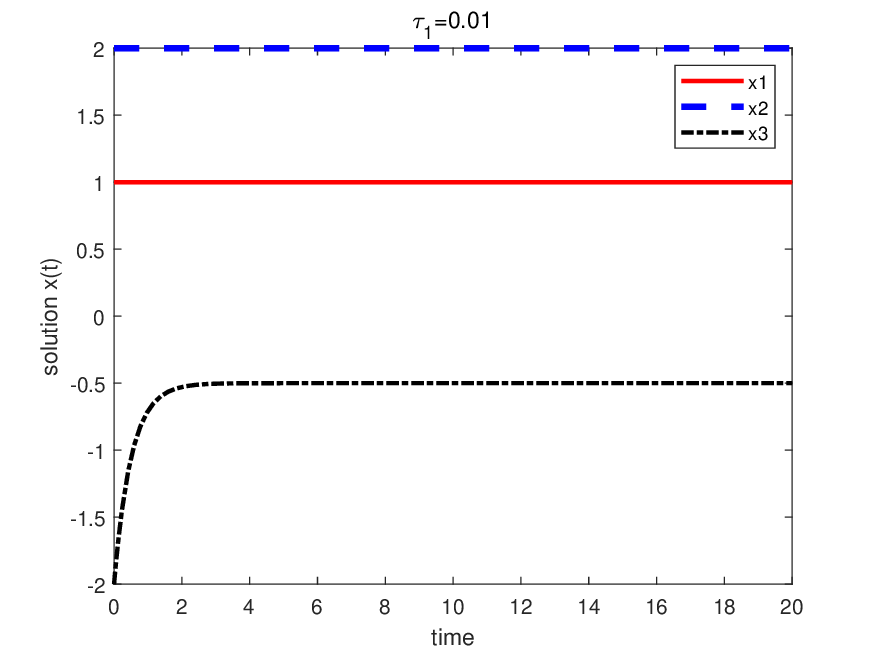}
		
	\end{minipage}
}
\caption{The transient behaviors of the NNMDD~\eqref{eq:dtdnn} with different values of  $\tau_{1}$. }\label{F9}
\end{figure}
\end{example}

\begin{example}\label{ex:example4}
	Consider the AVE \eqref{eq:ave} with
	\begin{equation*}
		A=\left[\begin{array}{ccc}
			8 & -1 &0 \\
			-1 & 8  & -1 \\
			0  &-1 & 8
		\end{array}
		\right] \quad \text{and}\quad
		b=\left[
		\begin{array}{c}
			-10\\
			9\\
			-10
		\end{array}
		\right].
	\end{equation*}
	This AVE has a unique solution $x^{*}=[-1,1,-1]^{\top}$ with $\|A^{-1}\|< 1$.
	
	For the NNMMD~\eqref{eq:mtdnn}, we let $\tau_{1}=1$, $\tau_{2}=0.5$. For $k=0.01$, by using the appropriate linear matrix inequality (LMI) solver to get the feasible numerical solution of the LMI \eqref{2LMI}, we have
	\begin{equation*}
		\begin{array}{ll}
			P=\left[\begin{array}{ccc}	
			53.9876 &   0.9903  &  0.2375\\
			0.9903  & 53.8978  &  0.9903\\
			0.2375  &  0.9903  & 53.9876
			\end{array}
			\right]\succ 0,& Q=\left[\begin{array}{ccc}		
				263.4657  &  2.1503 &  -0.8182\\
				2.1503 & 262.6324  &  2.1503\\
				-0.8182 &   2.1503 & 263.4657
			\end{array}
			\right]\succ 0,\\
			H=\left[\begin{array}{ccc}
				156.5716 &   0.7261 &   0.0486\\
				0.7261 & 156.5920  &  0.7261\\
				0.0486  &  0.7261 & 156.5716
			\end{array}
			\right]\succ 0,&
			D=\left[\begin{array}{ccc}
			19.7867   &      0     &    0\\
			0  & 19.4132    &     0\\
			0     &    0  & 19.7867
			\end{array}
			\right]\succ 0,\\
			T_{1}=\left[\begin{array}{ccc}
			6.3425  &  0.5075  &  0.0731\\
			0.5107  &  6.3758  &  0.5107\\
			0.0731  &  0.5075  &  6.3425
			\end{array}
			\right],&
			T_{2}=\left[\begin{array}{ccc}
				-15.4606  &  0.4365 &   0.0481\\
				0.4133 & -15.1657   & 0.4133\\
				0.0481  &  0.4365 & -15.4606
			\end{array}
			\right].
		\end{array}
	\end{equation*}

\begin{figure}[htbp]
	{
		\begin{minipage}{.5\linewidth}
			
			\includegraphics[scale=0.5]{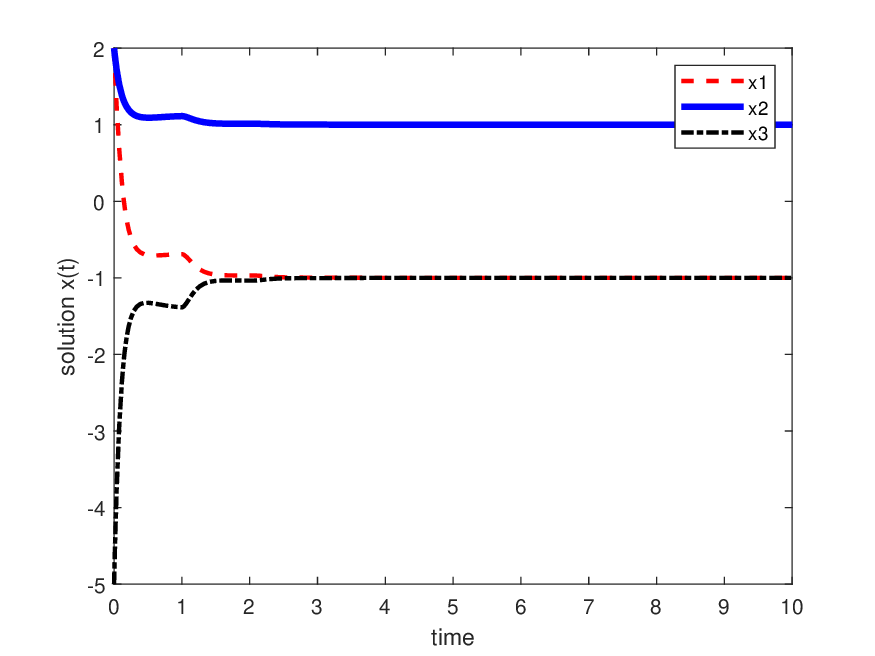}
			\caption{The transient behaviors of NNMMD~\eqref{eq:mtdnn}.}\label{4F1}
		\end{minipage}
	}
	\quad
	{
		\begin{minipage}{.5\linewidth}
			\centering
			\includegraphics[scale=0.5]{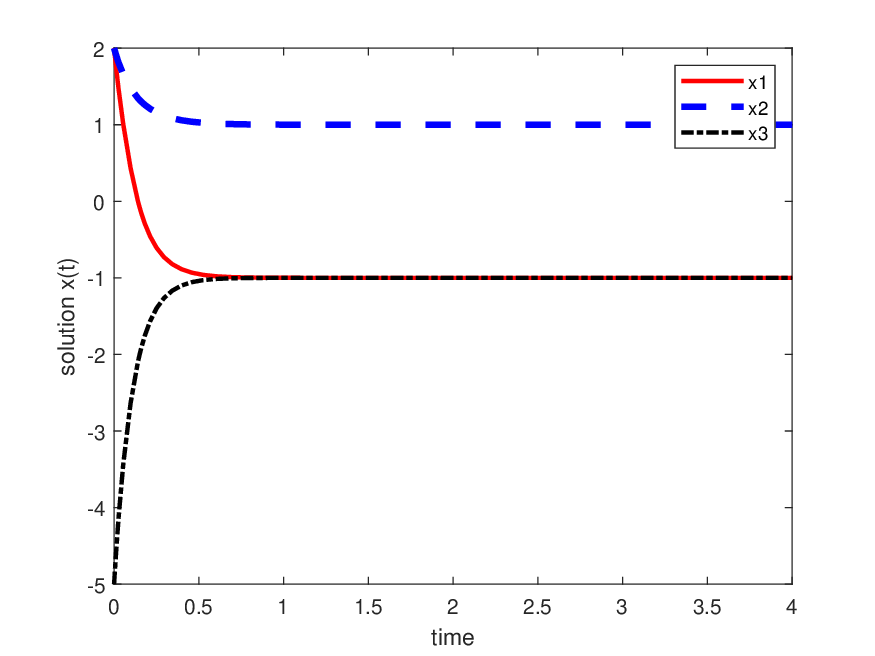}
			\caption{The transient behaviors of NNMDD~\eqref{eq:dtdnn}.}\label{4F2}
		\end{minipage}
	}
\end{figure}
	By Theorem~\ref{stable:mtdnn}, the equilibrium point of the NNMMD~\eqref{eq:mtdnn} is exponentially stable. For the NNMDD~\eqref{eq:dtdnn}, we first let $\tau_{1}=0.01$. Then for $k=0.01$, the feasible numerical solution of the LMI~\eqref{1LMI} could be as
	\begin{align*}
		P&=
		\left[
		\begin{array}{ccc}
			0.1138  &  0.0114  &  0.0011\\
			0.0114 &   0.1149  &  0.0114\\
			0.0011  &  0.0114   & 0.1138
		\end{array}
		\right]\succ 0,\quad Q=
		\left[
		\begin{array}{ccc}
			1.1119  & -0.0005 &  -0.0002\\
			-0.0005  &  1.1115 &  -0.0005\\
			-0.0002  & -0.0005  &  1.1119
		\end{array}
		\right]\succ 0,\\
		D&=
		\left[
		\begin{array}{ccc}
			 0.0370     &    0   &      0\\
			0  &  0.0364     &    0\\
			0     &    0  &  0.0370
		\end{array}
		\right]\succ 0.
	\end{align*}
	By virtue of Corollary~\ref{the3}, the equilibrium point of the NNMDD~\eqref{eq:dtdnn} is exponentially stable. Figures \ref{4F1}-\ref{4F2} show the corresponding transient behaviors of the NNMMD~\eqref{eq:mtdnn} and the NNMDD~\eqref{eq:dtdnn} with the initial function $[2\cos(t),2\cos(t),-5\cos(t)]^{\top}$. It can be found that the corresponding transient behaviors of the NNMMD~\eqref{eq:mtdnn} and the NNMDD~\eqref{eq:dtdnn} converge to the equilibrium point $x^{*}=[-1,1,-1]^{\top}$, the unique solution of the AVE~\eqref{eq:ave} in this example.

\end{example}

\section{Conclusion}\label{sec:conc}
Two neural network models with delays are proposed to solve the AVE \eqref{eq:ave}. The exponential stabilities of the proposed models have been proved. Specially, the NNMMD~\eqref{eq:mtdnn} is exponentially stable if \eqref{2LMI} holds and the NNMDD~\eqref{eq:dtdnn} is exponentially stable if \eqref{1LMI} holds. Moreover, theoretically, both \eqref{2LMI} and \eqref{1LMI} can be satisfied when $\|A^{-1}\|\le 1$ or $\|A^{-1}\|>1$. Numerical experiments demonstrate the effectiveness of the proposed models.

\end{document}